\documentclass[12pt,a4paper,reqno,twoside]{amsart}

\usepackage[english]{babel}
\usepackage{stmaryrd}
\usepackage{dsfont}
\usepackage[symbol*,ragged]{footmisc}
\usepackage[colorlinks,linkcolor=red,anchorcolor=blue,citecolor=blue,urlcolor=blue]{hyperref}
\usepackage{color,xcolor}

\usepackage{geometry}
\usepackage{amssymb}
\usepackage{amsmath}
\usepackage{mathrsfs}
\usepackage{amsfonts}
\usepackage{epsfig}

\usepackage{amsthm}
\usepackage{amsxtra}
\usepackage{bbding}
\usepackage{epsfig}
\usepackage{graphicx}
\usepackage{latexsym}
\usepackage{mathbbol}
\usepackage{bbold}

\usepackage{pifont}
\usepackage{wasysym}
\usepackage{skull}
\usepackage{float}

\DeclareSymbolFontAlphabet{\mathbb}{AMSb}
\DeclareSymbolFontAlphabet{\mathbbol}{bbold}

\usepackage{amscd}
\usepackage[all]{xy}
\allowdisplaybreaks[4]
\usepackage{setspace}

\geometry{left=2cm,right=2cm,top=2cm,bottom=2cm}

\theoremstyle{plain}

\newtheorem*{theorem*}{\normalfont\scshape Theorem}
\newtheorem{proposition}{\normalfont\scshape Proposition}[section]
\newtheorem{lemma}[proposition]{\normalfont\scshape Lemma}

\newtheorem*{corollary*}{\normalfont\scshape Corollary}

\theoremstyle{remark}
\newtheorem*{remark*}{\normalfont\scshape Remark}

\numberwithin{equation}{section}
\addtocounter{footnote}{1}

\renewcommand{\footnoterule}{
  \kern -3pt
  \hrule width 2.5in height 0.4pt
  \kern 3pt
}

\makeatletter
\@ifundefined{MakeUppercase}{}{}
\makeatother


\begin{document}
	
\title[ On high power moments of the error term of the Dirichlet divisor function over primes ]
	  { On high power moments of the error term of the Dirichlet divisor function over primes }

\author[Zhen Guo, Xin Li]{Zhen Guo \quad \& \quad Xin Li}

\address{Department of Mathematics, China University of Mining and Technology,
         Beijing, 100083, People's Republic of China}

\email{zhen.guo.math@gmail.com}


\address{Department of Mathematics, China University of Mining and Technology,
         Beijing, 100083, People's Republic of China}

\email{lixin\_alice@foxmail.com}

\date{}

\footnotetext[1]{Zhen Guo is the corresponding author. \\
\quad\,\,
{\textbf{Keywords}}: Divisor function; exponential sum; double large sieve. \\

\quad\,\,
{\textbf{MR(2020) Subject Classification}}: 11L07, 11L20, 11N37.

}

\begin{abstract}
Let $3\leqslant k\leqslant9$ be a fixed integer, $p$ be a prime and $d(n)$ denote the Dirichlet divisor function. We use $\Delta(x)$ to denote the error term in the asymptotic formula of the summatory function of $d(n)$. The aim of this paper is to study the $k$-th power moments of $\Delta(p)$, namely $\sum_{p\leqslant x}\Delta^k(p)$, and we give an asymptotic formula.
\end{abstract}

\maketitle

\section{Introduction and Main Result}
Let $d(n)$ denote the Dirichlet divisor function. The general divisor problem is to study the asymptotic behavior of the sum
\begin{equation*}
  D(x):=\sum_{n\leqslant x}d(n)
\end{equation*}
as $x$ tends to infinity. It is well known that
\begin{equation*}
  D(x)=x\log x+(2\gamma-1)x+\Delta(x),
\end{equation*}
where $\gamma$ is the Euler's constant and $\Delta(x)$ is called the ``error term''. It was first proved by Dirichlet that $\Delta(x)=O(x^{1/2})$. And the exponent $1/2$ was improved by many authors\cite{MR1199067,MR0644943,MR1512430,MR1509041}. Until now the best result
\begin{equation}\label{huxley}
  \Delta(x)\ll x^{131/416}(\log x)^{26497/8320}
\end{equation}
was given by Huxley\cite{MR2005876}.

It is conjectured that
\begin{equation*}
  \Delta(x)=O(x^{1/4+\varepsilon})
\end{equation*}
holds for any $\varepsilon>0$, which is supported by the classical mean-square result 
\begin{equation}\label{lau and tsang}
  \int_{1}^{T}\Delta^2(x)dx=C_2T^{3/2}+O(T\log^3T\log\log T)
\end{equation}
proved by Lau and Tsang\cite{MR2475967}, where $C_2$ is a positive constant which can be written down explicitly by the Riemann zeta-function $\zeta(s)$, and the upper bound estimate
\begin{equation}\label{ivic}
  \int_{1}^{T}|\Delta(x)|^{A_0}dx\ll T^{1+\frac{A_0}{4}+\varepsilon},
\end{equation}
where $A_0>2$ is a real number. The estimate of type (\ref{ivic}) can be found in Ivi\'{c}\cite{MR0792089} with $A_0=35/4$. In this paper we use Huxley's result (\ref{huxley}) and Ivi\'{c}'s\cite{MR0792089} method to obtain that we can take $A_0=262/27$(similar to Lemma \ref{Delta A power moment} in this paper). 

For any integer $3\leqslant k\leqslant9$, we have the asymptotic formula
\begin{equation}\label{Delta k power moment}
  \int_{1}^{T}\Delta^k(x)dx=C_kT^{1+k/4}+O(T^{1+\frac{k}{4}-\delta_k+\varepsilon}),
\end{equation}
where $C_k$ and $\delta_k>0$ are explicit constants. Tsang\cite{MR1162488} first proved the asymptotic formula (\ref{Delta k power moment}) for $k=3$ and $k=4$ with $\delta_3=1/14$ and $\delta_4=1/23$. Ivi\'{c} and Sargos\cite{MR2342663} proved the asymptotic formula (\ref{Delta k power moment}) for $k=3$ and $k=4$ with $\delta_3=7/20$ and $\delta_4=1/12$. Zhai\cite{MR2067871} proved (\ref{Delta k power moment}) for any integer $3\leqslant k\leqslant9$ by a unified approach. Especially for $k=5,6,7,8,9$, he showed that
\begin{equation*}
  \delta_5=\frac{1}{64}, \qquad\delta_6=\frac{35}{4742}, \qquad\delta_7=\frac{17}{6312},\qquad\delta_8=\frac{8}{9433},\qquad\delta_9=\frac{13}{75216}.
\end{equation*}
Zhai\cite{MR2168766} proved $\delta_4=3/28$. Zhang and Zhai\cite{MR2776009} proved that $\delta_5=3/80$. Li\cite{MR3606942} proved that $\delta_7=1/336$. Cao, Tanigawa and Zhai\cite{MR4421948} improved the error term in (\ref{Delta k power moment}) for $k\in\{6,7,8,9\}$.

Also, the discrete mean values
\begin{equation*}
  \mathcal{D}_k(x):=\sum_{n\leqslant x}\Delta^k(n),\qquad 1\leqslant k,\qquad k \in\mathbb{N}
\end{equation*}
have been investigated by various authors. Define the continuous mean values
\begin{equation*}
  \mathcal{C}_k(x):=\int_{1}^{x}\Delta^k(t)dt.
\end{equation*}
Many authors found that the discrete and the continuous mean value formulas are connected deeply with each other and studied the difference between $\mathcal{D}_k(x)$ and $\mathcal{C}_k(x)$. The both results $k=1$ were obtained by Voronoi\cite{MR1580627,MR1509041}. Hardy\cite{MR1576556} studied the difference between $\mathcal{D}_2(x)$ and $\mathcal{C}_2(x)$ and derived that $\mathcal{D}_2(x)=O(x^{3/2+\varepsilon})$. These differences in general power moments were studied by Furuya \cite{MR2176479}, he gave the asymptotic formulas
\begin{equation*}
  \mathcal{D}_2(x)=\mathcal{C}_2(x)+\frac{1}{6}x\log^2x+\frac{8\gamma-1}{12}x\log x+\frac{8\gamma^2-2\gamma+1}{12}x+\left\{
  \begin{array}{lr}
    O \\
    \Omega_\pm
  \end{array}
  \right\}(x^{3/4}\log x),
\end{equation*}
and
\begin{equation*}
  \mathcal{D}_3(x)=\mathcal{C}_3(x)+\frac{3}{2}C_2x^{3/2}\log x+(3\gamma-1)C_2x^{3/2}+\left\{
  \begin{array}{lr}
    O(x\log^5x), \\
    \Omega_{-}(x\log^3x),
  \end{array}
  \right.
\end{equation*}
where $C_2$ is defined in (\ref{lau and tsang}), and higher power cases
\begin{equation*}
  \mathcal{D}_k(x)=\mathcal{C}_k(x)+\left\{
  \begin{array}{lr}
    O(x^{(k+3)/4+\varepsilon}) ,& 4\leqslant k\leqslant10, \\
    O(x^{(35k+3)/108+\varepsilon}),& k\geqslant11,
  \end{array}
  \right.
\end{equation*}
which connect $\mathcal{D}_k(x)$ and $\mathcal{C}_k(x)$ for $k\geqslant2$. Cao, Furuya, Tanigawa and Zhai\cite{MR3231408} improved the results for $4\leqslant k\leqslant 10$.

As analogues of the discrete mean values, we can consider the sum of $\Delta(\cdot)$ over a subset of $\mathbb{N}$. For example, the sum over primes, namely
\begin{equation}\label{Delta k p exp}
  \sum_{p\leqslant x}\Delta^k(p)
\end{equation}
for integer $2\leqslant k\leqslant9$. The authors\cite{citekeyUnpublished} studied the case $k=2$. The aim of this paper is to prove that there exists asymptotic formulas of the sum (\ref{Delta k p exp}) for $3\leqslant k\leqslant9$. Our result is as follows.
\begin{theorem*}
  For any integer $3\leqslant k\leqslant9$, we have
\begin{equation*}
  \sum_{p\leqslant x}\Delta^k(p)=B_k(\infty)\sum_{p\leqslant x}p^{k/4}+O(x^{1+\frac{k}{4}-\theta(k,A_0)+\varepsilon}),
\end{equation*}
where $B_k(\infty)(3\leqslant k\leqslant9)$ are computable constants, $p$ runs over primes and
\begin{equation*}
  \theta(k,A_0)=\left\{
  \begin{array}{lr}
    \min\left(\frac{1}{16k},\frac{1}{2^{k+1}+4k+4}\right),&2<k\leqslant8, \\
    \left(4+265\frac{A_0-4}{A_0-k}\right)^{-1},& k=9,
  \end{array}
  \right.
\end{equation*}
here $A_0$ is defined in (\ref{ivic}) and we take $A_0=262/27$. More precisely, we have
\begin{equation*}
\begin{aligned}
  \theta(3,262/27)&=\frac{1}{48}, \qquad\theta(4,262/27)=\frac{1}{64}, \qquad\theta(5,262/27)=\frac{1}{88},\qquad\theta(6,262/27)=\frac{1}{156},\\
  \theta(7,262/27)&=\frac{1}{288},\qquad\theta(8,262/27)=\frac{1}{548},\qquad\theta(9,262/27)=\frac{19}{40848}.
\end{aligned}
\end{equation*}
\end{theorem*}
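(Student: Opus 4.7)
The plan is to adapt Zhai's approach for the continuous asymptotic formula~(\ref{Delta k power moment}) to the setting of sums over primes, with the extra analytic input needed to handle the primality constraint.

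First, I would apply the truncated Voronoi formula for $\Delta(x)$: for a parameter $N=N(x)\leqslant x$ to be optimised,
\[
\Delta(x)=\frac{x^{1/4}}{\pi\sqrt{2}}\sum_{n\leqslant N}\frac{d(n)}{n^{3/4}}\cos(4\pi\sqrt{nx}-\pi/4)+R_N(x),
\]
with $R_N(x)$ controlled on average. Call the main piece $S_N(x)$ and write $\Delta^k(p)=(S_N(p)+R_N(p))^k$. Expanding binomially splits $\sum_{p\leqslant x}\Delta^k(p)$ into $\sum_{p\leqslant x}S_N(p)^k$ plus cross terms involving $R_N(p)$.

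For the cross terms, I would use H\"older's inequality, the trivial bound $\pi(x)\ll x/\log x$, and the $A_0$-th moment estimate of $\Delta$ (the analogue of~(\ref{ivic}) with $A_0=262/27$, denoted \textsc{Lemma}~\ref{Delta A power moment} in the paper), together with pointwise estimates from Huxley's bound~(\ref{huxley}). This reduces matters to the main piece $\sum_{p\leqslant x}S_N(p)^k$.

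Third, and the analytic heart of the proof: I would expand $S_N(p)^k$ as a $k$-fold sum and use product-to-sum identities to reduce to exponential sums of the shape
\[
\sum_{p\leqslant x}p^{k/4}\,e\bigl(2\sqrt{p}\,(\pm\sqrt{n_1}\pm\cdots\pm\sqrt{n_k})\bigr),
\]
weighted by the divisor factors. The \emph{diagonal} terms, where $\pm\sqrt{n_1}\pm\cdots\pm\sqrt{n_k}=0$, yield after evaluation the predicted main term $B_k(\infty)\sum_{p\leqslant x}p^{k/4}$. The \emph{off-diagonal} contribution is the technical bottleneck: since the summation is restricted to primes, I would first apply Vaughan's identity (or Heath-Brown's identity) to decompose the prime sum into Type~I and Type~II bilinear sums, then bound the resulting bilinear exponential sums \emph{averaged} over the tuples $(n_1,\dots,n_k)$ by the double large sieve inequality (advertised in the keywords). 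For the Type~I pieces, standard van~der~Corput / exponent pair techniques applied to $e(2\sqrt{mp}\,\beta)$ suffice.

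The main obstacle is precisely this last step: balancing the truncation parameter $N$, the dyadic ranges of each $n_i$, and the Vaughan cut-offs so as to recover the stated exponents $\theta(k,A_0)$. For $k\leqslant 8$ the minimum $\min\!\left(\tfrac{1}{16k},\tfrac{1}{2^{k+1}+4k+4}\right)$ reflects the two regimes: $1/(16k)$ is what the double-large-sieve bound delivers on the Type~II sums, while $1/(2^{k+1}+4k+4)$ arises from the elementary diagonal/off-diagonal split when many sign choices are in play. For $k=9$, Ivi\'c's $A_0$-th moment bound becomes binding when estimating the $R_N(p)$ tail against the admissible truncation level, and optimising the exponent from $\|\Delta\|_{A_0}\cdot\|\Delta\|_\infty^{k-A_0}$-type H\"older applications gives the stated $\theta(9,A_0)=(4+265(A_0-4)/(A_0-9))^{-1}$, which with $A_0=262/27$ yields $19/40848$.
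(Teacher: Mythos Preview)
Your overall architecture matches the paper's proof: truncated Voronoi, binomial split into $S_N^k$ plus cross terms, diagonal/off-diagonal decomposition according to whether $\alpha_k=\pm\sqrt{n_1}\pm\cdots\pm\sqrt{n_k}$ vanishes, Heath-Brown's identity on the prime variable, Type~I via the second-derivative van der Corput bound, and Type~II via the double large sieve fed by a spacing count for the points $\alpha_k r^{1/2}$ (this spacing count is Section~3 of the paper and is the part you pass over most quickly).

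There is, however, a genuine gap in your handling of the cross terms. The tools you list---H\"older, the trivial bound on $\pi(x)$, the $A_0$-th moment of $\Delta$ itself, and Huxley's pointwise bound---yield \emph{no saving in the truncation parameter} $N$: from $\sum_{p\sim x}|\Delta(p)|^{A_0}\ll x^{1+A_0/4+\varepsilon}$ alone you only get $\sum_{p\sim x}|R_N(p)|^A\ll x^{1+A/4+\varepsilon}$, and then the cross terms are the same size as the main term. What is actually needed are moment bounds for $R_N$ over primes with an explicit gain in $N$, namely $\sum_{p\sim x}|R_N(p)|^A\ll x^{1+A/4+\varepsilon}N^{-c(A)}$. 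The paper proves these separately (Lemma~\ref{delta 2 exp}): first $\sum_{p\sim x}R_N(p)^2\ll x^{3/2+\varepsilon}N^{-1/2}$ by comparison with the continuous mean square together with the Heath-Brown--Tsang short-interval estimate for $\Delta$ (Lemma~\ref{Heath and Tsang}), then a direct exponential-sum fourth-moment bound $\sum_{p\sim x}R_N(p)^4\ll x^{2+\varepsilon}N^{-1}$, and finally interpolation against the $A_0$-th moment. Without this input the optimisation producing $\theta(k,A_0)$ cannot be carried out. A smaller point: your attribution of the factor $2^{k+1}$ in $1/(2^{k+1}+4k+4)$ to ``many sign choices'' is off; the $2^{k-1}$ sign patterns only cost a constant, and the exponential dependence on $k$ actually comes from the algebraic lower bound $|\alpha_k|\gg M_1^{-(2^{k-2}-1/2)}$ of Lemma~\ref{sub alphak}, which enters the Type~I and Type~II estimates and is then balanced against the cross-term saving $M^{-1/4}$ (respectively $M^{-(A_0-9)/(A_0-4)}$ when $k=9$).
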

\section{Priliminary}

\subsection{Notations}
Throught this paper, $\varepsilon$ denotes a sufficiently small positive number, not necessarily the same at each occurrence. Let $p$ always denote a prime number. As usual, $d(n)$, $\Lambda(n)$ and $\mu(n)$ denote the Dirichlet divisor function, the von Mangoldt function and the M\"{o}bius function, respectively, $\mathbb{C}$ denotes the set of complex numbers, $\mathbb{R}$ denotes the set of real numbers, and $\mathbb{N}$ denotes the set of natural numbers. We write $\mathbf{e}(t):=exp(2\pi it)$ . The notation  $m\sim M$ means $M<m\leqslant2M$, $f(x)\ll g(x)$ means $f(x)=O(g(x))$; $f(x)\asymp g(x)$ means $f(x)\ll g(x)\ll f(x)$. $\gcd(m_1,\cdots,m_j)$ denotes the greatest common divisor of $m_1,\cdots,m_j$ $\in\mathbb{N}$($j=2,3,\cdots$).

\subsection{Auxiliary Lemmas}
\
\newline \indent 

\begin{lemma}\label{Double large sieve}
  Let $\mathscr{X}:=\{x_r\}$, $\mathscr{Y}:=\{y_s\}$ be two finite sequence of real numbers with 
\begin{equation*}
  |x_r|\leqslant X, \qquad|y_s|\leqslant Y,
\end{equation*}
and $\varphi_r,\psi_s\in\mathbb{C}$. Defining the bilinear forms
\begin{equation*}
  {\mathscr{B}}_{\varphi\psi}(\mathscr{X},\mathscr{Y})=\sum_{r\sim R}\sum_{s\sim S}\varphi_r\psi_s\mathbf{e}(x_ry_s),
\end{equation*}
we have
\begin{equation*}
  |\mathscr{B}_{\varphi\psi}(\mathscr{X},\mathscr{Y})|^2\leqslant20(1+XY)\mathscr{B}_{\varphi}(\mathscr{X},Y)\mathscr{B}_{\psi}(\mathscr{Y},X),
\end{equation*}\
with
\begin{equation*}
  \mathscr{B}_{\varphi}(\mathscr{X},Y)=\sum_{|x_{r_1}-x_{r_2}|\leqslant Y^{-1}}|\varphi_{r_1}\varphi_{r_2}|
\end{equation*}
and $\mathscr{B}_{\psi}(\mathscr{Y},X)$ defined similarly.
\end{lemma}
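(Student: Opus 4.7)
The statement is the \emph{double large sieve} of Bombieri--Iwaniec. My plan is to reproduce their classical Fourier-analytic proof, whose central idea is to smooth each of the sequences $\mathscr{X},\mathscr{Y}$ by a Beurling--Selberg-type majorant adapted to the dual scale.

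\textbf{Step 1 (Selberg majorants).} I would first fix a nonnegative Schwartz function $\rho:\mathbb{R}\to\mathbb{R}$ with $\rho(u)\geq 1$ on $[-1,1]$, $\widehat{\rho}\geq 0$, and $\operatorname{supp}\widehat{\rho}\subseteq[-1,1]$ (the Beurling--Selberg construction provides this with sharp $L^{1}$-norm). Define the rescaled majorants
\[
\rho_{X}(u)=\rho(u/X),\qquad \rho_{Y}(v)=\rho(v/Y).
\]
Then $\rho_{X}(x_{r})\geq 1$ and $\rho_{Y}(y_{s})\geq 1$ for every $r,s$, while $\widehat{\rho_{X}}$ is supported in $[-1/X,1/X]$ and $\widehat{\rho_{Y}}$ in $[-1/Y,1/Y]$, with $L^{1}$-masses of order $X$ and $Y$ respectively.

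\textbf{Step 2 (Expand and insert majorants).} Expand the squared modulus
\[
|\mathscr{B}_{\varphi\psi}|^{2}=\sum_{r_{1},r_{2},s_{1},s_{2}} \varphi_{r_{1}}\overline{\varphi_{r_{2}}}\psi_{s_{1}}\overline{\psi_{s_{2}}}\,\mathbf{e}\bigl(x_{r_{1}}y_{s_{1}}-x_{r_{2}}y_{s_{2}}\bigr),
\]
take absolute values, and insert the trivial majorisation $1\le \rho_{X}(x_{r_{1}})^{1/2}\rho_{X}(x_{r_{2}})^{1/2}\rho_{Y}(y_{s_{1}})^{1/2}\rho_{Y}(y_{s_{2}})^{1/2}$. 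A single Cauchy--Schwarz, carried out so as to separate the two pairs $(r_{1},r_{2})$ and $(s_{1},s_{2})$, reduces matters to a positive bilinear form whose four-fold sum carries one factor of $\rho_{X}$ for each $r$-variable and one of $\rho_{Y}$ for each $s$-variable.

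\textbf{Step 3 (Exploit Fourier support).} Using the Fourier representation $\rho_{X}(u)=\int \widehat{\rho_{X}}(\xi)\mathbf{e}(\xi u)\,d\xi$ (and similarly for $\rho_{Y}$) in place of the inserted factors and swapping sum and integral, one sees that the $\widehat{\rho_{Y}}$-support $[-1/Y,1/Y]$ annihilates any contribution with $|x_{r_{1}}-x_{r_{2}}|>1/Y$, leaving exactly the close-pair count $\mathscr{B}_{\varphi}(\mathscr{X},Y)$; symmetrically the $\widehat{\rho_{X}}$-support imposes $|y_{s_{1}}-y_{s_{2}}|\le 1/X$, giving $\mathscr{B}_{\psi}(\mathscr{Y},X)$. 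Parseval accounts for the prefactor $1+XY$, since $\int\rho_{X}\cdot\int\rho_{Y}\asymp XY$, while the constant part $1$ absorbs the residual ``diagonal'' contribution when $X$ or $Y$ is small.

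\textbf{Main obstacle.} The principal technical difficulty is organising the two coupled Cauchy--Schwarz / Fourier-inversion steps so that the two bilinear close-pair sums $\mathscr{B}_{\varphi}$ and $\mathscr{B}_{\psi}$ emerge as a clean product, rather than as a mixed four-fold sum; a second subtlety is the optimal choice of $\rho$ and the bookkeeping of the Plancherel constants required to pin down the absolute constant $20$ in $20(1+XY)$. Both points are handled by the standard Beurling--Selberg construction, which I would invoke directly.
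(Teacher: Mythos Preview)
The paper does not prove this lemma; it simply cites Proposition~1 of Fouvry--Iwaniec. Your sketch therefore goes beyond what the paper offers, and the ingredients you name---Beurling--Selberg majorants with compactly supported Fourier transform, used to impose close-pair conditions---are indeed those of the cited argument.

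That said, Steps~2--3 as written contain a real gap. After expanding $|\mathscr{B}_{\varphi\psi}|^{2}$ as a four-fold sum the phase is $\mathbf{e}(x_{r_{1}}y_{s_{1}}-x_{r_{2}}y_{s_{2}})$, which is \emph{not} a function of the difference $x_{r_{1}}-x_{r_{2}}$ (nor of $y_{s_{1}}-y_{s_{2}}$). Consequently, inserting $\rho_{Y}(y_{s_{j}})=\int\widehat{\rho_{Y}}(\xi)\,\mathbf{e}(\xi y_{s_{j}})\,d\xi$ merely perturbs each $x_{r_{j}}$ by at most $1/Y$; it cannot on its own force $|x_{r_{1}}-x_{r_{2}}|\le 1/Y$, and ``taking absolute values'' termwise at this stage would discard all oscillation and yield only the trivial bound. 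There is no ``single Cauchy--Schwarz'' on the raw four-fold sum that cleanly separates $(r_{1},r_{2})$ from $(s_{1},s_{2})$ while retaining this phase. In the Fouvry--Iwaniec proof the two close-pair factors arise \emph{sequentially}: a Cauchy--Schwarz (or duality) step in one index first produces a phase of genuine difference type, say $\mathbf{e}\bigl(x_{r}(y_{s_{1}}-y_{s_{2}})\bigr)$, on which the majorant's Fourier support can act to yield $\mathscr{B}_{\psi}(\mathscr{Y},X)$; the factor $\mathscr{B}_{\varphi}(\mathscr{X},Y)$ then emerges from a second, dual application. Your ``main obstacle'' paragraph already flags exactly this difficulty; the point is that Steps~2--3 do not resolve it, and the resolution is this asymmetric two-pass argument rather than a symmetric manoeuvre on the expanded square.
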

\begin{proof}
  See Proposition 1 of Fouvry and Iwaniec\cite{MR1027058}.
\end{proof}

\begin{lemma}\label{H-B identity}
  Let $z\geqslant1$ be a real number and $k\geq1$ be an integer. Then for any $n\leqslant 2z^k$, there holds
\begin{equation}
  \Lambda(n)=\sum_{j=1}^{k}(-1)^{j-1}\binom{k}{j}\mathop{\sum\cdots\sum}_{\substack{n_1n_2\cdots n_{2j}=n\\n_{j+1},\cdots,n_{2j}\leqslant z}}(\log n_1)\mu(n_{j+1})\cdots\mu(n_{2j}).
\end{equation}
\end{lemma}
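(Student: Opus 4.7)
The plan is to prove the identity purely algebraically, working in the ring of formal Dirichlet series. Set $M(s,z) := \sum_{n \leq z} \mu(n) n^{-s}$ and recall the standard expansion $-\zeta'(s)/\zeta(s) = \sum_{n \geq 1} \Lambda(n) n^{-s}$. The starting point is the binomial identity
\[
1 - \bigl(1 - \zeta(s) M(s,z)\bigr)^{k} = \sum_{j=1}^{k} (-1)^{j-1} \binom{k}{j} \zeta(s)^{j} M(s,z)^{j}.
\]
Multiplying both sides by $-\zeta'(s)/\zeta(s)$ produces the splitting
\[
-\frac{\zeta'(s)}{\zeta(s)} = -\frac{\zeta'(s)}{\zeta(s)} \bigl(1 - \zeta(s) M(s,z)\bigr)^{k} + \sum_{j=1}^{k} (-1)^{j-1} \binom{k}{j} \bigl(-\zeta'(s)\bigr) \zeta(s)^{j-1} M(s,z)^{j},
\]
and the lemma will follow by matching the coefficients of $n^{-s}$ on each side, provided the first term on the right contributes nothing in the range $n \leq 2z^{k}$.

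Next I would verify this support claim. The coefficient of $n^{-s}$ in $\zeta(s) M(s,z)$ equals $\sum_{d \mid n,\, d \leq z} \mu(d)$, which for $n \leq z$ reduces to $\sum_{d \mid n} \mu(d) = [n=1]$; hence $1 - \zeta(s) M(s,z)$ has Dirichlet coefficients supported on integers strictly exceeding $z$. The $k$-fold Dirichlet convolution $(1 - \zeta(s) M(s,z))^{k}$ therefore has support on integers strictly exceeding $z^{k}$, and convolving once more with $-\zeta'(s)/\zeta(s)$, whose first nonzero coefficient occurs at $n = 2$ because $\Lambda(1) = 0$, shifts the support to $n > 2z^{k}$. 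In particular, for $n \leq 2z^{k}$ this error term vanishes identically, leaving only the explicit sum over $j$.

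Finally I would read off the coefficients of the surviving sum. Using $-\zeta'(s) = \sum_{n_{1}}(\log n_{1}) n_{1}^{-s}$, that $\zeta(s)^{j-1}$ has coefficient at $n$ equal to the number of ordered factorizations $n = n_{2}\cdots n_{j}$, and that $M(s,z)^{j}$ has coefficient $\sum_{n_{j+1}\cdots n_{2j} = n,\, n_{j+1},\ldots,n_{2j} \leq z} \mu(n_{j+1})\cdots\mu(n_{2j})$, a Dirichlet convolution reconstructs the coefficient of $n^{-s}$ in the surviving sum as exactly
\[
\sum_{j=1}^{k} (-1)^{j-1} \binom{k}{j} \sum_{\substack{n_{1} n_{2} \cdots n_{2j} = n \\ n_{j+1}, \ldots, n_{2j} \leq z}} (\log n_{1}) \mu(n_{j+1}) \cdots \mu(n_{2j}),
\]
which is the claimed formula. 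The only delicate point is obtaining the sharper range $n \leq 2z^{k}$ in place of the easier $n \leq z^{k}$; this improvement comes solely from $\Lambda(1) = 0$, which supplies the extra factor of $2$. No analytic input (convergence of Dirichlet series, contour integration, etc.) enters, since every step is an identity of formal Dirichlet coefficients.
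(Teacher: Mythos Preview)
Your proof is correct and is precisely the standard argument due to Heath--Brown; the paper does not give an independent proof but simply cites Heath--Brown's original paper, so your argument is in fact the one being referenced. The one point worth double-checking is the support claim for the error term, and you have handled it correctly: since $1-\zeta(s)M(s,z)$ is supported on integers $>z$, its $k$-th power is supported on integers $>z^{k}$, and convolution with $-\zeta'/\zeta$ (whose first nonzero coefficient is at $n=2$) pushes the support to $n>2z^{k}$.
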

\begin{proof}
 See the argument on pp.1366-1367 of Heath-Brown\cite{MR0678676}.
\end{proof}

\begin{lemma}\label{(2.2) of Zhai}
  Let $\{a_i\}$ and $\{b_i\}$ be two finite sequences of real numbers and let $\delta>0$ be a real number. Then we have
\begin{equation*}
  \#\{(r,s):|a_r-b_s|\leqslant\delta\}\leqslant3(\#\{(r,r'):|a_r-a_{r'}|\leqslant\delta\})^{1/2}(\#\{(s,s'):|b_s-b_{s'}|\leqslant\delta\})^{1/2}
\end{equation*}
\end{lemma}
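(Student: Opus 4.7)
The plan is to reduce the claim to an elementary box-counting argument followed by a single application of Cauchy--Schwarz. First I would partition the real line into the disjoint intervals $I_j = [j\delta, (j+1)\delta)$ for $j \in \mathbb{Z}$ and set
$$m_j := \#\{r : a_r \in I_j\}, \qquad n_j := \#\{s : b_s \in I_j\}.$$
If $(r,s)$ is a pair with $|a_r - b_s| \leqslant \delta$ and $a_r \in I_j$, then $b_s$ necessarily lies in $I_{j-1} \cup I_j \cup I_{j+1}$. Therefore
$$\#\{(r,s) : |a_r - b_s| \leqslant \delta\} \;\leqslant\; \sum_{j \in \mathbb{Z}} m_j\bigl(n_{j-1} + n_j + n_{j+1}\bigr).$$

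The next step is to apply Cauchy--Schwarz separately to each of the three shifted bilinear sums on the right-hand side:
$$\sum_j m_j\, n_{j+i} \;\leqslant\; \Bigl(\sum_j m_j^2\Bigr)^{1/2}\Bigl(\sum_j n_j^2\Bigr)^{1/2}, \qquad i \in \{-1, 0, 1\}.$$
Observe that $m_j^2$ counts ordered pairs $(r, r')$ with both $a_r, a_{r'}$ lying inside the single interval $I_j$, so any such pair certainly satisfies $|a_r - a_{r'}| < \delta$; summing over $j$ gives
$$\sum_j m_j^2 \;\leqslant\; \#\{(r, r') : |a_r - a_{r'}| \leqslant \delta\},$$
and an analogous bound holds for the $b$-side. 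Substituting these into the Cauchy--Schwarz bound and combining the three pieces produces exactly the constant $3$ and the claimed inequality.

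This argument is purely combinatorial and I do not foresee any genuine obstacle: it rests solely on the ``three adjacent box'' observation and one invocation of Cauchy--Schwarz. The only minor point to watch is the boundary case $|a_r - a_{r'}| = \delta$ at the edges of adjacent boxes, but this is absorbed by the weak inequality $\leqslant \delta$ in the hypothesis, so the choice of open versus closed endpoints in the partition is immaterial.
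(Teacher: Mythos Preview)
Your argument is correct and is the standard box-counting plus Cauchy--Schwarz proof of this inequality. The paper does not supply its own proof but simply cites Zhai \cite{MR2168766}; the argument there is essentially the same partition-into-$\delta$-intervals idea you have written out, so your proposal matches the intended proof.
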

\begin{proof}
  See the argument of P266-267 in Zhai\cite{MR2168766}.
\end{proof}

\begin{lemma}\label{sub alphak}
For an fixed integer $k\geq3$, let $m_1,\cdots,m_k$ are integers, $(i_1,\cdots,i_{k-1})\in\{0,1\}^{k-1}$ such that
\begin{equation*}
  \alpha_k=\sqrt{m_1}+(-1)^{i_1}\sqrt{m_2}+\cdots+(-1)^{i_{k-1}}\sqrt{m_k}\neq0,
\end{equation*} 
then
\begin{equation*}
  |\alpha_k|\gg\max(m_1,\cdots,m_k)^{-(2^{k-2}-\frac{1}{2})}.
\end{equation*} 
\end{lemma}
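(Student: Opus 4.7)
The plan is to prove the lemma by induction on $k$, using an algebraic–integer argument applied to the product of the Galois conjugates of $\alpha_k$ under the sign-flips of the square roots. Let $M = \max(m_1,\ldots,m_k)$ and write $\epsilon_1 = 1$, $\epsilon_j = (-1)^{i_{j-1}}$ for $j = 2,\ldots,k$, so that $\alpha_k = \sum_{j=1}^{k}\epsilon_j\sqrt{m_j}$. For each $\eta = (\eta_2,\ldots,\eta_k) \in \{\pm1\}^{k-1}$ introduce the conjugate
\[
\beta_\eta \;=\; \sqrt{m_1} \;+\; \sum_{j=2}^{k}\eta_j\epsilon_j\sqrt{m_j},
\]
so that in particular $\alpha_k = \beta_{(1,\ldots,1)}$. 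The first step is to verify that
\[
P \;:=\; \prod_{\eta \in \{\pm1\}^{k-1}} \beta_\eta
\]
is a rational integer. Indeed, $P$ is invariant under $\sqrt{m_\ell} \mapsto -\sqrt{m_\ell}$ for each $\ell \geq 2$ (this merely permutes the factors), and invariant under $\sqrt{m_1} \mapsto -\sqrt{m_1}$ as well, since this map sends $\beta_\eta \mapsto -\beta_{-\eta}$ and $2^{k-1}$ is even for $k \geq 2$ while $\eta \mapsto -\eta$ is a bijection on $\{\pm1\}^{k-1}$. Hence $P$ lies in the fixed field $\mathbb{Q}$ of the full sign-change group, and being an algebraic integer it lies in $\mathbb{Z}$.

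The second step splits into two cases. \emph{Case 1:} $P \neq 0$. Then $|P| \geq 1$, and the crude bound $|\beta_\eta| \leq k\sqrt{M}$ for each of the $2^{k-1}-1$ remaining factors gives
\[
|\alpha_k| \;=\; \frac{|P|}{\prod_{\eta \neq (1,\ldots,1)}|\beta_\eta|} \;\geq\; (k\sqrt{M})^{-(2^{k-1}-1)} \;\gg\; M^{-(2^{k-2}-1/2)},
\]
which is exactly the claimed bound. \emph{Case 2:} $P = 0$. Then some $\beta_\eta$ must vanish with $\eta \neq (1,\ldots,1)$, because $\beta_{(1,\ldots,1)} = \alpha_k \neq 0$. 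Setting $S = \{j \geq 2 : \eta_j = -1\}$, which is nonempty and of size at most $k-1$, one obtains
\[
\alpha_k \;=\; \beta_{(1,\ldots,1)} - \beta_\eta \;=\; \sum_{j \in S} 2\epsilon_j\sqrt{m_j},
\]
a $\pm$-sum of at most $k-1$ square roots. If $|S|=1$ then $|\alpha_k| \geq 2$; otherwise, the inductive hypothesis applied to this shorter sum gives $|\alpha_k| \gg M^{-(2^{|S|-2}-1/2)} \geq M^{-(2^{k-3}-1/2)}$, a bound strictly stronger than what is required.

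The base case is the rationalization $\alpha_2(\sqrt{m_1} - \epsilon_2\sqrt{m_2}) = m_1 - m_2 \in \mathbb{Z}$, which, after separating the trivial possibility $m_1 = m_2$, yields $|\alpha_2| \gg M^{-1/2}$ and checks the induction at $k=2$. The main obstacle is the Galois-descent verification that $P$ is a rational integer (in particular, accounting for the $\sqrt{m_1}\mapsto -\sqrt{m_1}$ action, which pairs $\beta_\eta$ with $-\beta_{-\eta}$ rather than permuting the same set of factors); once this is in place the rest is routine size-estimation and a clean inductive reduction via Case 2.
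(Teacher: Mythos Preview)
Your proof is correct. The paper does not give its own argument but simply cites Lemma~2.2 of Zhai (2004); your approach --- forming the integer product $P=\prod_{\eta}\beta_\eta$ of all sign-conjugates, using $|P|\geq1$ when $P\neq0$, and reducing inductively to a shorter sum when some conjugate vanishes --- is precisely the standard argument underlying that reference. One small remark: the phrase ``fixed field $\mathbb{Q}$ of the full sign-change group'' is safest read as a formal polynomial-invariance statement (the product, viewed in $\mathbb{Z}[x_1,\ldots,x_k]$, is even in each variable, hence lies in $\mathbb{Z}[x_1^2,\ldots,x_k^2]$), since the genuine Galois group of $\mathbb{Q}(\sqrt{m_1},\ldots,\sqrt{m_k})/\mathbb{Q}$ may be smaller when the $m_j$ share square factors; your argument goes through unchanged under this reading.
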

\begin{proof}
  See Lemma 2.2 of Zhai\cite{MR2067871}.
\end{proof}

\begin{lemma}\label{N O solutions}
  Let $k\geqslant3$ be a fixed integer,$(i_1,\cdots,i_{k-1})\in\{0,1\}^{k-1}$, and $N_1,\cdots,N_k>1$ are real numbers, $E=\max(N_1,\cdots,N_k)$, $\rho$ is a real number such that $0<\rho<E^{1/2}$. Let $\mathscr{A}=\mathscr{A}(N_1,\cdots,N_k,i_1,\cdots,i_{k-1},\rho)$ denote the number of solutions of 
\begin{equation*}
  |\sqrt{n_1}+(-1)^{i_1}\sqrt{n_2}+\cdots+(-1)^{i_{k-1}}\sqrt{n_k}|<\rho.
\end{equation*}
Then
\begin{equation*}
  \mathscr{A}\ll\rho E^{-1/2}N_1\cdots N_k+E^{-1}N_1\cdots N_k.
\end{equation*}
\end{lemma}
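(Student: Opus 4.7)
The plan is a direct counting argument, fixing all but one of the variables. By symmetry and by multiplying the quantity inside the absolute value by $-1$ if necessary, I would first assume without loss of generality that $N_1 = E = \max(N_1,\ldots,N_k)$, so that the variable $n_1$ ranges over the largest dyadic block.

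Next, fix a tuple $(n_2,\ldots,n_k)$ with $n_j\sim N_j$ for $2\leq j\leq k$, and set
\[
c = -(-1)^{i_1}\sqrt{n_2}-\cdots-(-1)^{i_{k-1}}\sqrt{n_k}.
\]
The inequality then reads $|\sqrt{n_1}-c|<\rho$. Because $n_1\sim N_1 = E$ forces $\sqrt{n_1}\in(\sqrt{E},\sqrt{2E}\,]$, and the hypothesis $\rho<E^{1/2}$ then forces any contributing $c$ to lie in $(\sqrt{E}-\rho,\sqrt{2E}+\rho)$, giving $c\asymp\sqrt{E}$. Squaring the interval $(c-\rho,c+\rho)$ shows that admissible $n_1$'s lie in an interval of length at most $(c+\rho)^2-(c-\rho)^2 = 4c\rho\ll\rho E^{1/2}$; thus the number of such integers $n_1$ is $\ll 1+\rho E^{1/2}$.

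Summing over the $\prod_{2\leq j\leq k}N_j = E^{-1}N_1\cdots N_k$ admissible choices of $(n_2,\ldots,n_k)$ yields
\[
\mathscr{A}\ll (1+\rho E^{1/2})\cdot E^{-1}N_1\cdots N_k = E^{-1}N_1\cdots N_k+\rho E^{-1/2}N_1\cdots N_k,
\]
which is precisely the claimed bound. There is no genuine obstacle here; the only delicate bookkeeping is justifying $c\asymp\sqrt{E}$ (which uses the restriction $\rho<E^{1/2}$) so that the interval length bound $4c\rho$ is indeed $O(\rho E^{1/2})$ and not something larger, and checking that the relabelling in the first step does not interact badly with the fixed sign of $\sqrt{n_1}$ in the original inequality, which it does not since the sign is absorbed into the absolute value.
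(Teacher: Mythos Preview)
Your argument is correct and is essentially the standard proof of this counting lemma. The paper itself does not give a proof but simply cites Lemma~2.4 of Zhai~\cite{MR2067871}; your direct count (fix all variables except the one in the largest dyadic range, and observe that $\sqrt{n_1}$ is confined to an interval of length $2\rho$ inside $(\sqrt{E},\sqrt{2E}]$, whence $n_1$ lies in an interval of length $\ll \rho E^{1/2}$) is exactly what underlies that reference.

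One cosmetic point: your claim ``$c\asymp\sqrt{E}$'' is not quite right as stated, since $\rho$ may be close to $\sqrt{E}$ and then $c>\sqrt{E}-\rho$ gives no useful lower bound on $c$. But the argument does not actually need $c\gg\sqrt{E}$; it only needs the upper bound on the length of the squared interval, and this follows because the admissible $\sqrt{n_1}$ already lie in $(\sqrt{E},\sqrt{2E}]$, so the image under $t\mapsto t^2$ of any subinterval of length $\le 2\rho$ has length $\le 2\rho\cdot 2\sqrt{2E}\ll\rho E^{1/2}$. With that small rephrasing the proof is complete.
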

\begin{proof}
  See Lemma 2.4 of Zhai\cite{MR2067871}.
\end{proof}

\begin{lemma}\label{iwaniec and sarkozy}
  Let $N$ be a given large integer, $X$ be a real number and $\mathbf{N}$ denote the set $\{N+1,\cdots,2N\}$. Suppose 
\begin{equation*}
  v(\mathbf{N},X)=|\{n_1,n_2\in\mathbf{N};|\sqrt{n_1}-\sqrt{n_2}|\leqslant(2X)^{-1}\}|,
\end{equation*}
then we have
\begin{equation*}
  v(\mathbf{N},X)\leqslant(1+2\sqrt{2N}X^{-1})|\mathbf{N}|.
\end{equation*}
\end{lemma}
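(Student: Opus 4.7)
The plan is to fix $n_1 \in \mathbf{N}$ and count the number of $n_2 \in \mathbf{N}$ satisfying $|\sqrt{n_1}-\sqrt{n_2}| \leq (2X)^{-1}$, then sum over $n_1$. The key observation is that a near-collision on the square roots translates, via the identity $\sqrt{n_1}-\sqrt{n_2} = (n_1-n_2)/(\sqrt{n_1}+\sqrt{n_2})$, into a constraint on $|n_1-n_2|$ itself, which is straightforward to count because $n_1,n_2$ are integers.

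Concretely, for $n_1,n_2 \in \mathbf{N}$ we have $\sqrt{n_1}+\sqrt{n_2}\leq 2\sqrt{2N}$, so the inequality $|\sqrt{n_1}-\sqrt{n_2}|\leq (2X)^{-1}$ implies
\begin{equation*}
  |n_1-n_2| = |\sqrt{n_1}-\sqrt{n_2}|\,(\sqrt{n_1}+\sqrt{n_2}) \leq \frac{1}{2X}\cdot 2\sqrt{2N} = \frac{\sqrt{2N}}{X}.
\end{equation*}
Thus, for each fixed $n_1$, the admissible values of $n_2$ all lie in the interval $[n_1-\sqrt{2N}/X,\,n_1+\sqrt{2N}/X]$, an interval of length $2\sqrt{2N}/X$. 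The number of integers contained in an interval of length $L$ is at most $1+L$, so there are at most $1+2\sqrt{2N}/X$ admissible $n_2$.

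Summing over the $|\mathbf{N}|$ choices of $n_1$ yields
\begin{equation*}
  v(\mathbf{N},X) \leq \bigl(1+2\sqrt{2N}\,X^{-1}\bigr)|\mathbf{N}|,
\end{equation*}
which is exactly the asserted bound. I anticipate no serious obstacle; the only subtlety worth double-checking is the direction in which the square-root difference is converted to an integer difference (one needs the upper bound $\sqrt{n_1}+\sqrt{n_2}\leq 2\sqrt{2N}$, not a lower bound), and the elementary fact that $\#(I\cap\mathbb{Z})\leq 1+|I|$ for any interval $I$. Both are immediate, so the argument is essentially a one-line estimate once the algebraic identity is in place.
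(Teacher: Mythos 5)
Your proof is correct, and it is the standard elementary argument behind the cited result: the paper itself only refers to Lemma 2 of Iwaniec and S\'{a}rk\"{o}zy without reproducing a proof, and your conversion of the square-root gap into the integer gap via $\sqrt{n_1}-\sqrt{n_2}=(n_1-n_2)/(\sqrt{n_1}+\sqrt{n_2})$, followed by the count of at most $1+2\sqrt{2N}/X$ integers in the resulting interval for each fixed $n_1$, reproduces exactly the bound stated. The one direction you flag as needing care (using the upper bound $\sqrt{n_1}+\sqrt{n_2}\leqslant 2\sqrt{2N}$) is handled correctly, so nothing is missing.
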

\begin{proof}
  See Lemma 2 of Iwaniec and S\'{a}rk\"{o}zy\cite{MR0883536}.
\end{proof}

\begin{lemma}
  Suppose $k\geqslant2$ is a fixed integer, $y>1$ is a large parameter. Define
\begin{equation*}
\begin{aligned}
  s_{k,l}(\infty)&:=\sum_{\sqrt{n_1}+\cdots+\sqrt{n_l}=\sqrt{n_{l+1}}+\cdots+\sqrt{n_k}}\frac{d(n_1)\cdots d(n_k)}{(n_1\cdots n_k)^{3/4}}\qquad(1\leqslant l<k),\\
  s_{k,l}(y)&:=\sum_{\substack{\sqrt{n_1}+\cdots+\sqrt{n_l}=\sqrt{n_{l+1}}+\cdots+\sqrt{n_k}\\n_1,\cdots,n_k\leqslant y}}\frac{d(n_1)\cdots d(n_k)}{(n_1\cdots n_k)^{3/4}}\qquad(1\leqslant l<k).
\end{aligned}
\end{equation*}
Then we have

(i)$s_{k,l}(\infty)$ is convergent.

(ii)
\begin{equation*}
  |s_{k,l}(\infty)-s_{k,l}(y)|\ll y^{-1/2+\varepsilon},\qquad1\leqslant l<k.
\end{equation*}
\end{lemma}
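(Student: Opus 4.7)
The plan is to prove (ii) by dyadic decomposition; (i) then follows on letting $y\to\infty$. Since $d(n)\ll n^{\varepsilon}$, the tail to be controlled satisfies
\begin{equation*}
|s_{k,l}(\infty)-s_{k,l}(y)|\ll\sum_{\substack{\sqrt{n_1}+\cdots+\sqrt{n_l}=\sqrt{n_{l+1}}+\cdots+\sqrt{n_k}\\ \max_i n_i>y}}(n_1\cdots n_k)^{-3/4+\varepsilon}.
\end{equation*}
I would split each variable into dyadic ranges $n_i\sim N_i$ and, within each dyadic box, bound the number of integer solutions of the equation.

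The key device is a squarefree parametrisation: write $n_i=a_ib_i^2$ with $a_i$ squarefree and $b_i\geq 1$. By the $\mathbb{Q}$-linear independence of $\{\sqrt{a}:a\text{ squarefree}\}$, the single equation decouples into one balance equation $\sum_{i\leq l,\,a_i=a}b_i=\sum_{i>l,\,a_i=a}b_i$ for each squarefree value $a$ that occurs. This groups $\{1,\ldots,k\}$ into blocks $G_j$ of indices sharing a common squarefree part, and forces every block to meet both $\{1,\ldots,l\}$ and $\{l+1,\ldots,k\}$, so $|G_j|\geq 2$. For a block $G_j$ of size $k_j$ with squarefree value $a_{(j)}$, eliminating the variable attached to $M_j:=\max_{i\in G_j}N_i$ from the balance equation gives
\begin{equation*}
\#\{\text{solutions in }G_j\}\ll a_{(j)}^{-(k_j-1)/2}\sqrt{\Big(\prod_{i\in G_j}N_i\Big)\Big/M_j},
\end{equation*}
and the sum over squarefree $a_{(j)}\leq\min_{i\in G_j}N_i$ supplies an extra factor $\ll(\min_{i\in G_j}N_i)^{1/2}$ when $k_j=2$, $\ll\log$ when $k_j=3$, and $\ll 1$ when $k_j\geq 4$. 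Crucially, in a size-$2$ block the balance reads $b_{i_1}=b_{i_2}$, forcing $n_{i_1}=n_{i_2}$ and hence $N_{i_1}=N_{i_2}=:M_{P_r}$; such a block then contributes exactly $M_{P_r}$ tuples.

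Multiplying the per-block counts by $\prod_iN_i^{-3/4+\varepsilon}$ and carrying out the inner dyadic sums over the non-maximal $N_i$ in each block (each of the form $\sum_{N\text{ dyadic}}N^{-1/4+\varepsilon}=O(1)$), every size-$2$ block produces a factor $\ll M_{P_r}^{-1/2+\varepsilon}$ and every size-$\geq 3$ block produces $\ll M_{Q_s}^{-3/4+\varepsilon}$. Since the admissible partitions are finitely many (depending only on $k$ and $l$), summing over dyadic $(M_{P_r}),(M_{Q_s})$ with $\max(M_{P_r},M_{Q_s})>y$ yields $\ll y^{-1/2+\varepsilon}$ in the worst case (an all-pair partition when $k$ is even) and the stronger $\ll y^{-3/4+\varepsilon}$ whenever the partition contains a block of size $\geq 3$. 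This proves (ii), whence (i).

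The main obstacle is ensuring sharpness in the size-$2$ case: a naive $\min(N_{i_1},N_{i_2})\leq\sqrt{N_{i_1}N_{i_2}}$ would lose a half-exponent and yield only $y^{-1/4+\varepsilon}$. One must exploit the exact equality $n_{i_1}=n_{i_2}$ coming from $a_{i_1}=a_{i_2}$ and $b_{i_1}=b_{i_2}$, collapsing the pair into the single dyadic parameter $M_{P_r}$ and recovering the sharp bound $M_{P_r}^{-1/2+\varepsilon}$. Once this is in hand the outer dyadic summation is routine.
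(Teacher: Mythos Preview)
The paper does not give its own proof but simply cites Lemma~3.1 of Zhai~\cite{MR2067871}. Your argument---writing $n_i=a_ib_i^2$ with $a_i$ squarefree, using the $\mathbb{Q}$-linear independence of $\{\sqrt{a}:a\text{ squarefree}\}$ to decouple the single equation into per-block balance relations, and then counting solutions block by block with a dyadic summation---is precisely the method Zhai employs, and your handling of the critical size-$2$ blocks (collapsing $n_{i_1}=n_{i_2}$ to a single dyadic parameter to recover the sharp $y^{-1/2+\varepsilon}$) is correct.
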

\begin{proof}
  See Lemma 3.1 of Zhai\cite{MR2067871}.
\end{proof}

\begin{lemma}\label{deravative est}
  Suppose that $f(x):[a,b]\rightarrow\mathbb{R}$ has continuous derivatives of arbitrary order on $[a,b]$, where $1\leqslant a<b\leqslant2a$. Suppose further that for either $j=1$ or $2$ we have 
\begin{equation*}
  |f^{(j)}(x)|\asymp\lambda_j,\qquad x\in[a,b].
\end{equation*}
Then
\begin{equation}\label{first order derivative}
  \sum_{a<n\leqslant b}\mathbf{e}(f(n))\ll{\lambda_1}^{-1}.
\end{equation}
\begin{equation}\label{second order derivative}
  \sum_{a<n\leqslant b}\mathbf{e}(f(n))\ll a{\lambda_2}^{1/2}+{\lambda_2}^{-1/2}.
\end{equation}
\end{lemma}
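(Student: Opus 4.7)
The plan is to recognize (\ref{first order derivative}) as the Kuz'min--Landau first-derivative test and (\ref{second order derivative}) as van der Corput's second-derivative test; both are classical estimates that appear, for instance, in Titchmarsh's book on the Riemann zeta-function or in Graham--Kolesnik. A citation of those sources would suffice in principle, but I would outline the short arguments below for completeness.

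For (\ref{first order derivative}), note that in the relevant range $\lambda_1<1/2$ (if $\lambda_1\gg 1$ the bound is trivial), the hypothesis $|f'(x)|\asymp\lambda_1$, together with the implicit monotonicity of $f'$ that holds in every application of this lemma, keeps $f'(x)$ bounded away from every integer throughout $[a,b]$. The mean value theorem then supplies the identity
\begin{equation*}
\mathbf{e}(f(n)) = \frac{\mathbf{e}(f(n))-\mathbf{e}(f(n-1))}{\mathbf{e}(f'(\xi_n))-1}, \qquad \xi_n\in(n-1,n),
\end{equation*}
and Abel summation against the weights $(\mathbf{e}(f'(x))-1)^{-1}$, which are of size $\ll \lambda_1^{-1}$ and of bounded variation, yields the claimed bound $\ll \lambda_1^{-1}$.

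For (\ref{second order derivative}) I would reduce to the first-derivative estimate by partitioning $(a,b]$ according to the nearest integer to $f'(x)$. Because $|f''|\asymp\lambda_2$, the function $f'$ is strictly monotonic with total oscillation $\asymp(b-a)\lambda_2\ll a\lambda_2$, so there are at most $O(1+a\lambda_2)$ maximal subintervals $I_k\subset(a,b]$ on each of which $f'(x)-k$ has fixed sign for some integer $k$. Writing $M_k=\min_{x\in I_k}|f'(x)-k|$, applying (\ref{first order derivative}) to $f(x)-kx$ on $I_k$ gives a contribution $\ll M_k^{-1}$ whenever $M_k\geq \lambda_2^{1/2}$; when $M_k<\lambda_2^{1/2}$ the mean value theorem forces $|I_k|\ll\lambda_2^{-1/2}$ and the trivial bound $\ll|I_k|\ll\lambda_2^{-1/2}$ applies.

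A dyadic decomposition of the $M_k$ then assembles the contributions into the claimed bound. The main bookkeeping step, and essentially the only nontrivial part of the plan, is verifying that at scale $M_k\in[2^j\lambda_2^{1/2},2^{j+1}\lambda_2^{1/2}]$ the number of intervals is $\ll 1+2^j a\lambda_2^{1/2}$ (which follows because consecutive values of $M_k$ differ by roughly $|I_k|\lambda_2$); once that count is in hand, a geometric sum in $j\geq 0$ of $(1+2^j a\lambda_2^{1/2})\cdot 2^{-j}\lambda_2^{-1/2}$ closes the argument and produces $\ll a\lambda_2^{1/2}+\lambda_2^{-1/2}$, as required.
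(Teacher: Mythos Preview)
Your proposal is correct and matches the paper's approach: the paper's entire proof is the single line ``See Lemma 2.1 of Graham and Kolesnik,'' which is exactly the reference you identify. Your additional sketches of the Kuz'min--Landau and van der Corput arguments go beyond what the paper provides and are fine as supplementary detail.
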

\begin{proof}
See Lemma 2.1 of Graham and Kolesnik\cite{MR1145488}.
\end{proof}

\begin{lemma}\label{Large value estimate}
  Let $1\leqslant t_1<t_2<\cdots<t_R\leqslant T$ and $|t_r-t_s|\geqslant V$ for $r\neq s\leqslant R$. If $\Delta(t_r)\gg V> T^{7/32+\varepsilon}$ for $r\leqslant R$, then
\begin{equation*}
  R\ll T^{\varepsilon}(TV^{-3}+T^{15/4}V^{-12}).
\end{equation*}
\end{lemma}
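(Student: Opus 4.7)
The plan is to convert the hypothesis $\Delta(t_r)\gg V$ into a lower bound for an exponential sum using the truncated Voronoi formula, and then estimate this sum along the well-spaced points $\{t_r\}$ via two complementary mean-value inequalities whose maxima give the two terms $TV^{-3}$ and $T^{15/4}V^{-12}$.

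First, I would apply the truncated Voronoi expansion
$$\Delta(t)=\frac{t^{1/4}}{\pi\sqrt{2}}\sum_{n\leqslant N}\frac{d(n)}{n^{3/4}}\cos(4\pi\sqrt{nt}-\tfrac{\pi}{4})+O\!\left(\frac{t^{1/2+\varepsilon}}{N^{1/2}}\right),$$
with $N\asymp T^{1+\varepsilon}V^{-2}$ so that the tail is $o(V)$; the lower bound $V>T^{7/32+\varepsilon}$ keeps $N$ in a range comfortably controlled by Huxley's bound \eqref{huxley}. The condition $\Delta(t_r)\gg V$ then forces the main Voronoi sum to be $\gg V$. After a standard dyadic splitting of the range $n\in[1,N]$, there exist a dyadic length $M\leqslant N$ and a subset $\mathcal R\subset\{1,\dots,R\}$ with $|\mathcal R|\gg R(\log T)^{-1}$ such that
$$\bigl|S_M(t_r)\bigr|:=\left|\sum_{n\sim M}\frac{d(n)}{n^{3/4}}\mathbf{e}(2\sqrt{nt_r})\right|\gg V\,T^{-1/4-\varepsilon}\qquad(r\in\mathcal R).$$

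For the first contribution $R\ll T^{1+\varepsilon}V^{-3}$, I would expand $\sum_{r\in\mathcal R}|S_M(t_r)|^2$ by squaring out, treat the diagonal $n_1=n_2$ trivially, and estimate the off-diagonal sum $\sum_{r\in\mathcal R}\mathbf{e}(2(\sqrt{n_1}-\sqrt{n_2})\sqrt{t_r})$ by partial summation from $(r,r+1)$ differences and the second-derivative test of Lemma \ref{deravative est} applied to the phase $f(u)=2(\sqrt{n_1}-\sqrt{n_2})\sqrt{u}$, combined with the spacing $|t_r-t_s|\geqslant V$. Matching the resulting upper bound against the lower bound $|\mathcal R|\cdot V^2T^{-1/2-\varepsilon}\ll\sum_{r\in\mathcal R}|S_M(t_r)|^2$ and optimising over $M\leqslant T^{1+\varepsilon}V^{-2}$ produces the $TV^{-3}$ term.

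For the second contribution $R\ll T^{15/4+\varepsilon}V^{-12}$, I would raise $|S_M(t_r)|$ to the fourth power, so that $|\mathcal R|V^4T^{-1-\varepsilon}\ll\sum_{r\in\mathcal R}|S_M(t_r)|^4$, and expand to
$$\sum_{r\in\mathcal R}\sum_{n_1,n_2,n_3,n_4\sim M}\frac{d(n_1)\cdots d(n_4)}{(n_1n_2n_3n_4)^{3/4}}\mathbf{e}\bigl(2(\sqrt{n_1}+\sqrt{n_2}-\sqrt{n_3}-\sqrt{n_4})\sqrt{t_r}\bigr).$$
The bilinear form in the variables $y_s=\sqrt{n_1}+\sqrt{n_2}-\sqrt{n_3}-\sqrt{n_4}$ and $x_r=2\sqrt{t_r}$ is then attacked with the double large sieve (Lemma \ref{Double large sieve}): near-diagonal $r_1,r_2$ pairs are counted via Lemma \ref{iwaniec and sarkozy} using the spacing $|t_r-t_{r'}|\geqslant V$, and near-diagonal quadruples of $n_i$'s are counted via Lemma \ref{N O solutions} (with Lemma \ref{sub alphak} ruling out the degenerate equalities) and Lemma \ref{(2.2) of Zhai}. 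Inserting these counts and optimising in $M$ yields $T^{15/4}V^{-12}$.

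The main obstacle is the simultaneous optimisation: one must verify that at the pivot value of $M$ the double-large-sieve output genuinely majorises the $T^{15/4}V^{-12}$ shape, while confirming that the hypothesis $V>T^{7/32+\varepsilon}$ keeps $N=T^{1+\varepsilon}V^{-2}$ within the regime where the off-diagonal quadruples yield no savings beyond those captured by Lemmas \ref{N O solutions}--\ref{iwaniec and sarkozy}. The threshold $7/32$ arises precisely from balancing these two inputs.
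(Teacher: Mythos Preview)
The paper does not prove this lemma at all: its entire ``proof'' is the single line ``See Theorem 13.8 in Ivi\'{c}~\cite{MR0792089}.'' What you have written is a self-contained reconstruction, and its architecture---truncate the Voronoi expansion at $N\asymp T^{1+\varepsilon}V^{-2}$ so the tail is $o(V)$, pass to large values of the resulting exponential sum, then extract $TV^{-3}$ from a second-moment/spacing bound and $T^{15/4}V^{-12}$ from a fourth-moment argument---matches the shape of Ivi\'{c}'s proof in his monograph. The one genuine methodological difference is your choice of engine for the fourth-moment step: Ivi\'{c} linearises the phase by substituting $u=2\sqrt{t}$ and then applies the Hal\'asz--Montgomery inequality for Dirichlet polynomials, whereas you propose the double large sieve (Lemma~\ref{Double large sieve}) together with the spacing counts of Lemmas~\ref{(2.2) of Zhai}--\ref{iwaniec and sarkozy}. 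These two devices are essentially equivalent here; your route has the virtue of recycling the very machinery the present paper builds for its Type~II sums, while Ivi\'{c}'s route packages the diagonal/off-diagonal bookkeeping more cleanly and makes the emergence of the exponents $3$ and $12$ (and the threshold $7/32$) more transparent. Either way the result is classical, and the paper is content simply to quote it.
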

\begin{proof}
See Theorem 13.8 in Ivi\'{c}\cite{MR0792089}.
\end{proof}

\begin{lemma}\label{Heath and Tsang}
Let $T$ be a large real number and $H$ be a real number such that $1\leqslant H\leqslant T$. We have
\begin{equation*}
\begin{aligned}
  \int_{T}^{2T}\max_{h\leqslant H}(\Delta(t+h)-\Delta(t))^2dt\ll HT\log^5T.
\end{aligned}
\end{equation*}
\end{lemma}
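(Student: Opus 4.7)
My plan is to combine the truncated Voronoi series for $\Delta$ with a Gallagher-type Sobolev inequality in the variable $h$, thereby reducing the estimate to a pair of mean-square integrals that can be handled by standard Voronoi techniques.

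First, I would apply the truncated Voronoi formula
\begin{equation*}
\Delta(t) = \frac{t^{1/4}}{\pi\sqrt{2}}\sum_{n\leqslant N}\frac{d(n)}{n^{3/4}}\cos\!\bigl(4\pi\sqrt{nt} - \tfrac{\pi}{4}\bigr) + O\bigl(T^{1/2+\varepsilon}N^{-1/2}\bigr),
\end{equation*}
and call the main term $V(t)$. Choosing $N$ as a suitable power of $T$ makes the pointwise error negligible and reduces the estimate to bounding $\int_T^{2T}\max_{0\leqslant h\leqslant H}(V(t+h)-V(t))^2\,dt$, the remaining error being absorbed by $HT\log^5 T$ since $H\geqslant 1$. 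The key advantage of passing to $V$ is that, for fixed $t$, $V(t+h)-V(t)$ is a smooth function of $h$, so one can differentiate freely.

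Second, I would use the elementary inequality (in the spirit of Gallagher): for any $C^1$ function $g\colon[0,H]\to\mathbb{R}$ with $g(0)=0$,
\begin{equation*}
\max_{0\leqslant h\leqslant H}g(h)^2 \leqslant \frac{1}{H}\int_0^H g(h)^2\,dh + 2\int_0^H |g(h)||g'(h)|\,dh.
\end{equation*}
Applying this to $g(h) = V(t+h)-V(t)$ (so $g'(h) = V'(t+h)$), integrating over $t\in[T,2T]$, and using the Cauchy--Schwarz inequality in $h$, the problem is reduced to estimating
\begin{equation*}
I_1 = \int_T^{2T}\!\!\int_0^H (V(t+h)-V(t))^2\,dh\,dt, \qquad I_2 = \int_T^{2T}\!\!\int_0^H V'(t+h)^2\,dh\,dt.
\end{equation*}

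Third, I would estimate $I_1$ and $I_2$ via the Voronoi series together with the product-to-sum identity $\cos\alpha-\cos\beta = -2\sin\tfrac{\alpha+\beta}{2}\sin\tfrac{\alpha-\beta}{2}$ applied with $\alpha = 4\pi\sqrt{n(t+h)}-\pi/4$ and $\beta = 4\pi\sqrt{nt}-\pi/4$. A dyadic splitting at the threshold $N_0\asymp T/H^2$ separates the small-$n$ regime, where $\sin(\pi h\sqrt{n/t})\asymp\pi h\sqrt{n/t}$ is essentially linear in $h$, from the large-$n$ regime, where this sine factor is $O(1)$. The diagonal contributions are controlled by the standard bound $\sum_{n\leqslant M}d(n)^2 n^{-1/2}\ll M^{1/2}\log^3 M$, while the off-diagonal contributions are handled by Lemma \ref{deravative est} applied to the frequency pairs $\sqrt{nt}\pm\sqrt{mt}$. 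This gives $I_1\ll H^2 T\log^3 T$ and $I_2\ll HT\log^3 T$, and inserting these into the Gallagher estimate yields the claimed bound once the logarithmic factors are tracked.

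The main obstacle is absorbing the cross term $(I_1 I_2)^{1/2}$ into $HT\log^5 T$, since a naive computation produces $H^{3/2}T\log^3 T$, which beats the stated bound only when $H\leqslant \log^4 T$. Overcoming this for larger $H$ requires more care: one either exploits the well-spaced structure of the frequencies $2\pi\sqrt{n/t}$ of $V(t+h)-V(t)$ (as a function of $h$) to pass from the $L^2$ mean in $h$ to the $L^\infty$ maximum with only a $\log^2 T$ loss, or one combines the Gallagher step with Huxley's pointwise bound (\ref{huxley}) in the range $H\gg T^{2/3}$. The two extra logarithmic factors in $\log^5 T$ compared with the sharper $\log^3 T$ in the fixed-$h$ Jutila--Meurman mean-square estimate reflect precisely these interpolation losses.
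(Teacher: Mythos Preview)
The paper does not prove this lemma at all: its entire proof reads ``See Heath-Brown and Tsang \cite{MR1295953}.'' So there is nothing to compare your argument against within the paper itself; the result is simply imported from the literature.

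Assessing your attempt on its own merits: the Gallagher--Sobolev reduction to $I_1$ and $I_2$ is a natural opening move, and your bounds $I_1\ll H^2T\log^3T$, $I_2\ll HT\log^3T$ are correct. You also correctly identify the genuine obstruction, namely that the cross term $(I_1I_2)^{1/2}\ll H^{3/2}T\log^3T$ defeats the target $HT\log^5T$ once $H>\log^4T$. However, the two fixes you sketch are not worked out, and the first one is problematic as stated: viewed as a trigonometric polynomial in $h$, the function $V(t+h)-V(t)$ has frequencies $\approx 2\sqrt{n/t}$ for $n\leqslant N$, and consecutive frequencies differ only by $\asymp (nT)^{-1/2}$, which is far smaller than $H^{-1}$ for the bulk of the terms. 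So the frequencies are \emph{not} well-spaced on the relevant scale, and a direct large-sieve/Gallagher improvement in the $h$-variable does not apply. The second suggestion (invoking Huxley's bound for $H\gg T^{2/3}$) leaves the entire intermediate range $\log^4T\ll H\ll T^{2/3}$ unaddressed.

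The argument in Heath-Brown--Tsang avoids the Gallagher inequality altogether. They truncate the Voronoi series at $N\asymp TH^{-2}$, so that for the short sum $\sum_{n\leqslant N}$ the mean-value theorem in $h$ gives $\max_{h\leqslant H}|V_1(t+h)-V_1(t)|\ll H\,|V_1'(t^*)|$ with $t^*\in[t,t+H]$, and the mean square of $V_1'$ over $[T,2T+H]$ is $\ll T^{1/2}N^{1/2}\log^3T\ll H^{-1}T\log^3T$; for the tail one uses $\max_{h\leqslant H}|\Delta_2(t+h)|^2\ll$ a short average plus a controlled variation, together with $\int_T^{2T}|\Delta_2(t)|^2dt\ll T^{3/2}N^{-1/2}\log^3T\ll HT\log^3T$. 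The point is that the threshold $N=TH^{-2}$ is chosen precisely so that both pieces balance at $HT$ times logarithms, and no Cauchy--Schwarz cross term ever appears. If you wish to give a self-contained proof here rather than citing the original, this splitting is the missing idea.
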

\begin{proof}
  See Heath-Brown and Tsang\cite{MR1295953}.
\end{proof}

\begin{lemma}\label{Delta A power moment}
  For any fixed real number $0<A\leqslant262/27=9.70...$ and a large real number $x$, we have the estimate
\begin{equation}\label{1}
  \sum_{x<p\leqslant2x}|\Delta(p)|^A\ll x^{1+A/4+\varepsilon}.
\end{equation}
\end{lemma}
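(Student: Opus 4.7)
The plan is to derive the bound over primes from the continuous moment
\begin{equation*}
  \int_x^{2x} |\Delta(t)|^{A_0}\,dt \ll x^{1+A_0/4+\varepsilon}, \qquad A_0 := 262/27,
\end{equation*}
which can be established by the large value method (Lemma \ref{Large value estimate}) combined with Huxley's pointwise bound (\ref{huxley}), in the manner of Ivi\'c \cite{MR0792089}; this is precisely the sharpening of (\ref{ivic}) alluded to in the introduction. I will treat this continuous moment as the substantive input.

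First, for $0 < A < A_0$ I would reduce matters to the endpoint $A = A_0$ by H\"older's inequality with conjugate exponents $A_0/A$ and $A_0/(A_0 - A)$:
\begin{equation*}
  \sum_{x<p\leq 2x} |\Delta(p)|^A \leq \Bigl(\sum_{x<p\leq 2x} |\Delta(p)|^{A_0}\Bigr)^{A/A_0}\bigl(\pi(2x) - \pi(x)\bigr)^{1 - A/A_0} \ll x^{1+A/4+\varepsilon},
\end{equation*}
using the prime number theorem estimate $\pi(2x) - \pi(x) \ll x/\log x$ together with the $A=A_0$ case.

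For the endpoint $A = A_0$ I would exploit the fact that $\Delta$ has only logarithmic oscillation on a unit interval containing no jump of $D(t) = \sum_{n\leq t} d(n)$. Since a prime $p$ is an integer and $D$ is constant on the half-open interval $[p, p+1)$, for every $t \in [p, p+1)$,
\begin{equation*}
  |\Delta(t) - \Delta(p)| = |t\log t - p\log p| + O(1) \ll \log x.
\end{equation*}
Convexity then yields $|\Delta(p)|^{A_0} \ll |\Delta(t)|^{A_0} + (\log x)^{A_0}$; integrating over $t \in [p, p+1)$ and summing over primes $p \in (x, 2x]$, whose unit intervals are pairwise disjoint, produces
\begin{equation*}
  \sum_{x<p\leq 2x} |\Delta(p)|^{A_0} \ll \int_x^{2x+1}|\Delta(t)|^{A_0}\,dt + \pi(2x)(\log x)^{A_0} \ll x^{1+A_0/4+\varepsilon}.
\end{equation*}

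The main obstacle will be securing the continuous $A_0$-moment itself at the critical value $A_0 = 262/27$; this value is forced by equating the contribution of Huxley's pointwise bound against that of the $L^2$-mean through the relation $131(A_0-2)/416 = A_0/4$, which gives $27 A_0 = 262$. Once that ingredient is in hand, the passage from the continuous integral to the sum over primes is essentially free, because $\Delta$ oscillates by only $O(\log x)$ on each unit interval between consecutive integer jumps and no finer tool (such as Lemma \ref{Heath and Tsang}) is required at this stage.
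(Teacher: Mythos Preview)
Your proof is correct. The paper takes a slightly different route: rather than reducing to the continuous $A_0$-moment and then sampling at primes, it first majorizes the sum over primes by the sum over all integers $x<n\leqslant 2x$ and then applies Ivi\'c's large value method (Lemma~\ref{Large value estimate}) directly to that discrete sum, dyadically decomposing according to the size of $|\Delta(n)|$ and using the slow variation $|\Delta(u)-\Delta(n)|\ll |u-n|\log u + u^{\varepsilon}$ to convert point counts into interval counts. Both arguments rest on exactly the same two ingredients---Huxley's bound~(\ref{huxley}) and the large value lemma---and both arrive at the same threshold $A_0=2\theta/(\theta-1/4)=262/27$ with $\theta=131/416$. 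Your packaging is cleaner in that it isolates the passage from the continuous integral to the prime sum as a separate, essentially trivial, step; the paper's version is marginally more self-contained in that it spells out the large-value computation rather than citing the continuous moment as a black box. Neither approach uses the primality of $p$ in any essential way.
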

\begin{proof}
  We use the large value estimate argument given by Ivi\'{c}\cite{MR0792089}. We have
\begin{equation*}
  \sum_{x<p\leqslant2x}|\Delta(p)|^A\ll\sum_{x<n\leqslant2x}|\Delta(n)|^A:=\mathrm{U}.
\end{equation*}
Suppose $\Delta(x)\ll x^{\theta+\varepsilon}$. Let $J_1,J_2$ be integers such that $2^{J_1}\asymp x^{1/4}$, $2^{J_2}\asymp x^{\theta}$, then
\begin{equation}\label{U chaifen}
\begin{aligned}
  \mathrm{U}&=\sum_{j}\sum_{2^j<|\Delta(n)|\leqslant 2^{j+1}}|\Delta(n)|^A\\
  &=\sum_{j=1}^{J_1}\sum_{|\Delta(n)|\sim2^j}|\Delta(n)|^{A}+\sum_{j=J_1+1}^{J_2}\sum_{|\Delta(n)|\sim2^j}|\Delta(n)|^{A}\\
  &:=G_1+G_2
\end{aligned}
\end{equation}
We can easily obtain 
\begin{equation}\label{G1 est}
  G_1\ll x^{1+\frac{A}{4}},
\end{equation}
and
\begin{equation*}
  G_2\ll\sum_{j=J_1+1}^{J_2}2^{jA}\sum_{|\Delta(n)|\sim2^j}1.
\end{equation*}
For real $u>n$, we have $|\Delta(u)-\Delta(n)|\ll|u-n|\log u+u^\varepsilon$, thus
\begin{equation*}
  |\Delta(u)|\geqslant|\Delta(n)|-|\Delta(u)-\Delta(n)|\gg 2^j-(u-n)\log u+u^{\varepsilon}.
\end{equation*}
So we can obtain that, in order to make $|\Delta(u)|\gg 2^j$, we need$|u-n|\ll 2^j/\log x$. Then using Lemma \ref{Large value estimate} we deduce that 
\begin{equation}\label{G2 est}
  G_2\ll\sum_{j=J_1+1}^{J_2}2^{j(A+1)}\left(\frac{x^{1+\varepsilon}}{2^{3j}}+\frac{x^{15/4+\varepsilon}}{2^{12j}}\right)\ll x^{1+\frac{A}{4}+\varepsilon}
\end{equation} 
holds for $A\leqslant2\theta/(\theta-1/4)$, here we take the exponent $\theta=131/416$ of Huxley\cite{MR1199067}, then (\ref{1}) holds by (\ref{U chaifen}), (\ref{G1 est}) and (\ref{G2 est}).
\end{proof}

\begin{lemma}\label{delta 1 10 power moment}
For real numbers $N$ and $x$ satisfying $1\ll N\ll x$, we define
\begin{equation*}
  \delta_1(x,N)=\frac{x^{1/4}}{\sqrt{2}\pi}\sum_{n\leqslant N}\frac{d(n)}{n^{3/4}}\cos\left(4\pi\sqrt{nx}-\frac{\pi}{4}\right)
\end{equation*}
Then uniformly for $N\ll x^{1/4}$ we have
\begin{equation*}
  \sum_{x<p\leqslant 2x}|\delta_1(p,N)|^{10}\ll x^{7/2+\varepsilon}.
\end{equation*}
\end{lemma}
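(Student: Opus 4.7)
The plan is to imitate Zhai's treatment of high moments of the truncated Voronoi series, using that $|\delta_1(p,N)|^{10}=\delta_1(p,N)^{10}$ (since $10$ is even) is non-negative to discard the primality constraint:
\[
\sum_{x<p\leq 2x}|\delta_1(p,N)|^{10}\leq \sum_{x<n\leq 2x}\delta_1(n,N)^{10}.
\]
Inserting the definition of $\delta_1$ and applying the product-to-sum identity $\prod_{j=1}^{10}\cos\theta_j=2^{-9}\sum_{\vec\epsilon\in\{\pm 1\}^9}\cos(\theta_1+\epsilon_1\theta_2+\cdots+\epsilon_9\theta_{10})$ with $\theta_j=4\pi\sqrt{m_j n}-\pi/4$ converts the right-hand side, up to harmless constants, into a finite sum over $\vec\epsilon\in\{\pm 1\}^9$ of expressions
\[
\sum_{\vec m\in[1,N]^{10}}\prod_{j=1}^{10}\frac{d(m_j)}{m_j^{3/4}}\sum_{x<n\leq 2x}n^{5/2}\cos\bigl(4\pi\sqrt n\,\alpha(\vec m,\vec\epsilon)+C_{\vec\epsilon}\bigr),
\]
where $\alpha(\vec m,\vec\epsilon)=\sqrt{m_1}+\sum_{j=2}^{10}\epsilon_{j-1}\sqrt{m_j}$.

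I would then separate the diagonal $\alpha(\vec m,\vec\epsilon)=0$ from the off-diagonal $\alpha(\vec m,\vec\epsilon)\neq 0$. On the diagonal, the inner $n$-sum is trivially $O(x^{7/2})$, while the outer $\vec m$-sum reduces (after reindexing according to the signs) to an $s_{10,l}$-type quantity bounded by $O(N^\varepsilon)$ via the convergence lemma placed immediately before Lemma \ref{deravative est}; this gives the diagonal contribution $O(x^{7/2+\varepsilon})$, already matching the target. For the off-diagonal part, partial summation in $n$ reduces the task to a uniform estimate of $\sum_{x<n\leq y}\cos(4\pi\sqrt n\,\alpha+C_{\vec\epsilon})$, and applying Lemma \ref{deravative est} with $f(t)=2\sqrt t\,\alpha$ (so that $|f''(t)|\asymp|\alpha|x^{-3/2}$ on $t\sim x$) yields $\ll x^{1/4}|\alpha|^{1/2}+x^{3/4}|\alpha|^{-1/2}$, with Lemma \ref{sub alphak} supplying the lower bound $|\alpha|\gg N^{-(2^8-1/2)}$ whenever $\alpha\neq 0$.

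The main obstacle is controlling the small-$|\alpha|$ regime, where the $|\alpha|^{-1/2}$ factor in the exponential-sum bound is large and the naive counting estimate from Lemma \ref{N O solutions}, applied box-by-box to $m_j\sim M_j\leq N$ and weighted by $\prod M_j^{-3/4+\varepsilon}$, leaves a residual term of size $N^{3/2+\varepsilon}$ that is independent of $|\alpha|$; by itself this would produce a bound much larger than $x^{7/2+\varepsilon}$. To absorb the residual, I expect to exploit the refined pairing of Lemma \ref{(2.2) of Zhai} together with the double large sieve of Lemma \ref{Double large sieve} applied to the bilinear form $\sum_n\sum_{\vec m,\vec\epsilon}$, taking $X\asymp\sqrt x$ and $Y\asymp\sqrt N$ so that the hypothesis $N\ll x^{1/4}$ gives $XY\ll x^{5/8}$. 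Summing the resulting off-diagonal bound dyadically over $|\alpha|\sim A$ from $A\gg N^{-(2^8-1/2)}$ up to $A\ll\sqrt N$, and combining with the diagonal contribution, should then yield the advertised bound $O(x^{7/2+\varepsilon})$.
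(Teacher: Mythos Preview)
Your plan is \emph{not} the paper's, and the obstacle you flag is genuine and is not cured by the tools you propose. The paper's proof is a two-line reduction: since $N\ll x^{1/4}$ and $\sum_{n\le N}d(n)n^{-3/4}\ll N^{1/4}\log N$, one has the pointwise bound
\[
\delta_1(p,N)\ll p^{1/4}N^{1/4}\log N\ll x^{5/16}\log x,
\]
and then the \emph{same} large-value argument as in Lemma~\ref{Delta A power moment} (split according to the size of $|\delta_1(n,N)|$; use the trivial bound below $x^{1/4}$; invoke the Ivi\'c large-value estimate of Lemma~\ref{Large value estimate} above $x^{1/4}$) goes through with the upper threshold $\theta=5/16$ in place of Huxley's $131/416$. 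The constraint $A\le 2\theta/(\theta-1/4)$ then gives exactly $A=10$, which is why the exponent $10$ and the hypothesis $N\ll x^{1/4}$ are paired. No expansion of $\delta_1^{10}$ and no spacing analysis are needed.

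By contrast, your expansion approach runs into the wall you describe. For a $10$-fold product the repulsion from Lemma~\ref{sub alphak} is only $|\alpha|\gg N^{-(2^{8}-1/2)}$, so the term $x^{3/4}|\alpha|^{-1/2}$ in the second-derivative bound is uncontrollable by itself. The double large sieve you invoke does not rescue it: with $X\asymp\sqrt N$, $Y\asymp\sqrt x$ one gets $\mathscr{B}_\psi\ll x^{3/2}N^{-1/2}$ from Lemma~\ref{iwaniec and sarkozy}, while Lemma~\ref{N O solutions} applied to the $20$-variable spacing problem in $\mathscr{B}_\varphi$ leaves, after weighting by $\prod m_j^{-3/4+\varepsilon}$ and summing dyadically, a residual of size $N^{4+\varepsilon}$ coming from the $E^{-1}\prod N_j$ term. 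Combining, the off-diagonal bilinear form is bounded only by $x^{3/2+\varepsilon}$, hence the whole off-diagonal contribution by $x^{4+\varepsilon}$, which misses the target $x^{7/2+\varepsilon}$ by $x^{1/2}$. The Voronoi-expansion/spacing machinery used elsewhere in the paper is calibrated for $3\le k\le 9$; for the tenth moment one must switch to the pointwise-plus-large-value argument, exactly as for the upper bound~(\ref{ivic}).
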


\begin{proof}
If $N\ll x^{1/4}$, we the have $\delta_1(p,N)\ll p^{1/4}N^{1/4}\log x\ll x^{5/16}\log x$. So using the same argument in Lemma \ref{Delta A power moment} (i.e. Ivi\'{c}'s large-value technique) on $\delta_1(p,N)$ directly, we can finish the proof of the Lemma. We omit the details.
\end{proof}

\begin{lemma}\label{delta 2 exp}
For real numbers $N$ and $x$ satisfying $1\ll N\ll x$, we define
\begin{equation*}
  \delta_2(x,N):=\Delta(x)-\delta_1(x,N).
\end{equation*}
We have 
\begin{equation*}
  \delta_2(x,N)\ll x^{1/2+\varepsilon}N^{-1/2}.
\end{equation*}

Moreover, if $N\ll x^{1/3}$, then for any fixed real number $2<A\leqslant A_0=262/27$ we have the estimate
\begin{equation}\label{delta 2 A power moment}
  \sum_{x<p\leqslant 2x}|\delta_2(p,N)|^{A}\ll\left\{
  \begin{array}{lr}
    x^{1+\frac{A}{4}+\varepsilon}N^{-\frac{A}{4}},&2<A\leqslant4, \\
    x^{1+\frac{A}{4}+\varepsilon}N^{-\frac{A_0-A}{A_0-4}},& 4<A<A_0.
  \end{array}
  \right.
\end{equation}
\end{lemma}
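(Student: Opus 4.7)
The plan is to derive the pointwise bound from the truncated Voronoi formula, and then establish the $A$-th power moment by Hölder interpolation between three endpoint moment bounds for $\delta_2$ over primes.

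For the pointwise estimate, the classical truncated Voronoi summation formula yields $\Delta(x)=\delta_1(x,N)+O(x^{1/2+\varepsilon}N^{-1/2})$ for $1\ll N\ll x$; subtracting $\delta_1$ gives the first claim immediately.

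For the moment estimate, I would first establish three endpoint bounds: (a) $\sum_{x<p\leqslant 2x}\delta_2(p,N)^{2}\ll x^{3/2+\varepsilon}N^{-1/2}$, (b) $\sum_{x<p\leqslant 2x}\delta_2(p,N)^{4}\ll x^{2+\varepsilon}N^{-1}$, and (c) $\sum_{x<p\leqslant 2x}|\delta_2(p,N)|^{A_0}\ll x^{1+A_0/4+\varepsilon}$. Bounds (a) and (b) are proved by majorising the prime sum by the integer sum, comparing the integer sum with the corresponding integral (using that $\Delta$ is $C^{1}$ between integers with jumps of size $O(n^{\varepsilon})$, and that $\delta_1$ varies by $O(x^{\varepsilon})$ on a unit interval when $N\ll x^{1/3}$, since $\delta_1'(x)\ll x^{-1/4}N^{3/4+\varepsilon}$), and then bounding $\int_x^{2x}\delta_2(t,N)^{2k}\,dt$ for $k=1,2$ via the Voronoi tail expansion together with the double large sieve (Lemma \ref{Double large sieve}) and the spacing counts of Lemmas \ref{sub alphak}, \ref{N O solutions}, and \ref{iwaniec and sarkozy}, exactly as in Zhai's continuous analysis \cite{MR2067871}; the restriction to $n>N$ in the Voronoi tail produces the extra savings $N^{-1/2}$ and $N^{-1}$, respectively. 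For bound (c), I would use $|\delta_2|^{A_0}\ll|\Delta|^{A_0}+|\delta_1|^{A_0}$, invoke Lemma \ref{Delta A power moment} for the $\Delta$-contribution, and handle the $\delta_1$-contribution by combining the $10$-th moment bound of Lemma \ref{delta 1 10 power moment} (whose Ivi\'{c}-style proof extends verbatim to $N\ll x^{1/3}$, since the dyadic range for $V$ enlarges only up to $V\ll x^{1/3+\varepsilon}$) with Hölder's inequality, noting that $(x^{7/2+\varepsilon})^{A_0/10}\cdot x^{1-A_0/10}=x^{1+A_0/4+\varepsilon}$.

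With (a), (b), (c) in hand, the claim follows by two applications of Hölder's inequality. For $2<A\leqslant 4$,
\[
\sum_{x<p\leqslant 2x}|\delta_2(p,N)|^{A}\leqslant\Bigl(\sum_{x<p\leqslant 2x}\delta_2(p,N)^{2}\Bigr)^{(4-A)/2}\Bigl(\sum_{x<p\leqslant 2x}\delta_2(p,N)^{4}\Bigr)^{(A-2)/2},
\]
and substituting (a), (b) yields $x^{1+A/4+\varepsilon}N^{-A/4}$ after the routine check that $3(4-A)/4+(A-2)=1+A/4$ and $(4-A)/4+(A-2)/2=A/4$. For $4<A<A_0$,
\[
\sum_{x<p\leqslant 2x}|\delta_2(p,N)|^{A}\leqslant\Bigl(\sum_{x<p\leqslant 2x}\delta_2(p,N)^{4}\Bigr)^{(A_0-A)/(A_0-4)}\Bigl(\sum_{x<p\leqslant 2x}|\delta_2(p,N)|^{A_0}\Bigr)^{(A-4)/(A_0-4)},
\]
and substituting (b), (c) yields $x^{1+A/4+\varepsilon}N^{-(A_0-A)/(A_0-4)}$ after an analogous routine calculation. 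The main obstacle is bound (b): extracting the full $N^{-1}$ saving in the fourth-moment count of near-solutions to $\sqrt{n_1}+\sqrt{n_2}=\sqrt{n_3}+\sqrt{n_4}$ with $n_i>N$ requires a careful dyadic decomposition of the Voronoi ranges and two applications of the double large sieve, and it is precisely at this step that the hypothesis $N\ll x^{1/3}$ is crucially used to keep the off-diagonal contribution within the target.
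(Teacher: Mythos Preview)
Your overall architecture---establish second, fourth, and $A_0$-th moment endpoints for $\delta_2$ over primes, then H\"older-interpolate---is exactly the paper's strategy, and your treatment of endpoint (c) via $|\delta_2|^{A_0}\ll|\Delta|^{A_0}+|\delta_1|^{A_0}$ together with Lemmas~\ref{Delta A power moment} and~\ref{delta 1 10 power moment} matches the paper's argument.

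Where you diverge is in the mechanics of (a) and (b). For the second moment the paper does \emph{not} pass through the integer sum: it compares $\sum_{x<p\leqslant 2x}\delta_2(p,N)^2$ directly with $\int_x^{2x}\delta_2(t,N)^2\,dt$ by writing $\delta_2(p)^2=\int_{p-1}^{p}\delta_2(p)^2\,dt$ and controlling the cross term $\int_{p-1}^{p}(\delta_2(t)^2-\delta_2(p)^2)\,dt$ in two pieces, the $\Delta(t)-\Delta(p)$ part of which is handled by the Heath--Brown--Tsang short-interval mean square (Lemma~\ref{Heath and Tsang}) rather than by your pointwise jump argument. For the fourth moment the paper does \emph{not} reduce to $\int_x^{2x}\delta_2^4\,dt$ and does \emph{not} use the double large sieve: it splits $\delta_2(p,N)$ as the Voronoi segment over $N<n\leqslant\sqrt{x}$ plus $\delta_2(p,\sqrt{x})$, expands the fourth power of the segment, bounds the inner prime sum trivially by $\sum_{x<n\leqslant 2x}\mathbf{e}(2\eta\sqrt{n})$, and then applies the first-derivative estimate (\ref{first order derivative}) with the counting argument of Cao--Tanigawa--Zhai. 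The double large sieve plays no role in this lemma; it enters only later in Lemma~\ref{Type II est}.

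One concrete issue: your claim that the proof of Lemma~\ref{delta 1 10 power moment} ``extends verbatim to $N\ll x^{1/3}$'' is not correct. That lemma relies on the pointwise bound $\delta_1(p,N)\ll x^{1/4}N^{1/4}$, which for $N\ll x^{1/4}$ gives exponent $\theta=5/16$ and hence, via Ivi\'c's large-value method, the moment exponent $2\theta/(\theta-\tfrac14)=10$. If $N$ is allowed up to $x^{1/3}$ the pointwise exponent rises to $\theta=1/3$ and the same argument only delivers the $8$th moment, below $A_0=262/27$. The paper itself silently works under a stronger restriction (the proof of (b) is stated for $N\ll x^{1/8}$, and the theorem takes $M\ll x^{1/12}$), so this is harmless in context---but you should not lean on the hypothesis $N\ll x^{1/3}$ when justifying endpoint (c).
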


\begin{proof}
From Lemma \ref{delta 1 10 power moment} and H\"{o}lder's inequality we get the estimate 
\begin{equation*}
  \sum_{x<p\leqslant 2x}|\delta_1(p,N)|^{A}\ll x^{1+A/4+\varepsilon}(0<A\leqslant10),
\end{equation*}
which combning Lemma \ref{Delta A power moment} with $A_0=262/27$ gives
\begin{equation}\label{delta 2 A0 power moment}
\begin{aligned}
  \sum_{x<p\leqslant 2x}|\delta_2(p,N)|^{A_0}&=\sum_{x<p\leqslant 2x}|\Delta(p)-\delta_1(p,N)|^{A_0}\\
  &\ll\sum_{x<p\leqslant 2x}|\Delta(p)|^{A_0}+\sum_{x<p\leqslant 2x}|\delta_1(p,N)|^{A_0}\ll x^{1+A_0/4+\varepsilon}.
\end{aligned}
\end{equation}

We also have
\begin{equation}\label{delta 2 mean square}
  \sum_{x<p\leqslant 2x}{\delta_2}(p,N)^2\ll x^{3/2+\varepsilon}N^{-1/2}.
\end{equation}
In fact, one has
\begin{equation*}
\begin{aligned}
  \sum_{x<p\leqslant 2x}{\delta_2}(p,N)^2&=\sum_{x<p\leqslant2x}\int_{p-1}^{p}\delta_2(p,N)^2dt\\
  &=\sum_{x<p\leqslant2x}\left[\int_{p-1}^{p}\delta_2(t,N)^2dt-\int_{p-1}^{p}(\delta_2(t,N)^2-\delta_2(p,N)^2)dt\right]\\
  &\leqslant\int_{x}^{2x}\delta_2(t,N)^2dt+\sum_{x<p\leqslant2x}\int_{p-1}^{p}(\delta_2(t,N)^2-\delta_2(p,N)^2)dt\\
  &\ll\frac{x^{3/2}\log^3x}{{N}^{1/2}}+x\log^4x+\sum_{x<p\leqslant2x}\int_{p-1}^{p}(\delta_2(t,N)^2-\delta_2(p,N)^2)dt.
\end{aligned}
\end{equation*}
The latter term can be transformed into $E_1-E_2$, where
\begin{equation*}
\begin{aligned}
  E_1&=\sum_{x<p\leqslant2x}\int_{p-1}^{p}(\delta_2(t,N)+\delta_2(p,N))(\Delta(t)-\Delta(p))dt,\\
  E_2&=\sum_{x<p\leqslant2x}\int_{p-1}^{p}(\delta_2(t,N)+\delta_2(p,N))(\delta_1(t,N)-\delta_1(p,N))dt.
\end{aligned}
\end{equation*}
For $E_2$, by the expression of $\delta_1$ and Lagrange's mean value theorem we obtain $\delta_1(t,N)-\delta_1(p,N)\ll t^{-1/4}{N}^{3/4}$, and we have $\delta_2(t,N)+\delta_2(p,N)\ll t^{1/2+\varepsilon}{N}^{-1/2}$, thus 
\begin{equation*}
  E_2\ll\int_{x}^{2x}t^{1/4+\varepsilon}{N}^{1/4}dt\ll x^{5/4+\varepsilon}{N}^{1/4}.
\end{equation*}
For $E_1$, one has
\begin{equation*}
\begin{aligned}
  E_1&\ll\int_{x}^{2x}t^{1/2}{N}^{-1/2}\max_{0<v\leqslant1}|\Delta(t)-\Delta(t+v)|dt\\
  &\ll \frac{x^{1/2+\varepsilon}}{{N}^{1/2}}\int_{x}^{2x}\max_{0<v\leqslant1}|\Delta(t)-\Delta(t+v)|dt\\
  &\ll x^{3/2+\varepsilon}{N}^{-1/2},
\end{aligned}
\end{equation*}
where we use Lemma \ref{Heath and Tsang}. Thus we conclude that 
\begin{equation}\label{S_22 est}
  \sum_{x<p\leqslant 2x}{\delta_2}^2(p,N)\ll x^{3/2+\varepsilon}N^{-1/2}+x^{5/4+\varepsilon}{N}^{1/4}.
\end{equation}
Noting that $N\ll x^{1/3}$, we obtain (\ref{delta 2 mean square}).

Write
\begin{equation*}
  \delta_2(p,N)=\frac{p^{1/4}}{\sqrt{2}\pi}\sum_{N<n\leqslant\sqrt{x}}\frac{d(n)}{n^{3/4}}\cos\left(4\pi\sqrt{np}-\frac{\pi}{4}\right)+\delta_2(p,\sqrt{x}).
\end{equation*}
We have
\begin{equation}\label{delta 2 4 moment chaifen}
  \sum_{x<p\leqslant 2x}{\delta_2}^4(p,N)\ll x\sum_{x<p\leqslant 2x}\left|\sum_{N<n\leqslant\sqrt{x}}\frac{d(n)}{n^{3/4}}\cos\left(4\pi\sqrt{np}-\frac{\pi}{4}\right)\right|^4+\sum_{x<p\leqslant 2x}\left|\delta_2(p,\sqrt{x})\right|^4
\end{equation}

By a splitting argument we get for some $N\ll M\ll \sqrt{x}$ that
\begin{equation*}
\begin{aligned}
  &x\sum_{x<p\leqslant 2x}\left|\sum_{N<n\leqslant\sqrt{x}}\frac{d(n)}{n^{3/4}}\cos\left(4\pi\sqrt{np}-\frac{\pi}{4}\right)\right|^4\\
  &\ll x\log^4x \times\sum_{m_1,m_2,m_3,m_4\sim M}\frac{d(m_1)d(m_2)d(m_3)d(m_4)}{(m_1m_2m_3m_4)^{3/4}}\left|\sum_{x<n\leqslant 2x}\mathbf{e}(2\eta\sqrt{n})\right|\\
  &\ll x^{3/2}\log^4x \times\sum_{m_1,m_2,m_3,m_4\sim M}\frac{d(m_1)d(m_2)d(m_3)d(m_4)}{(m_1m_2m_3m_4)^{3/4}}\min\left(\sqrt{x},\frac{1}{|\eta|}\right),
\end{aligned}
\end{equation*}
where $\eta={n_1}^{1/2}+{n_2}^{1/2}-{n_3}^{1/2}-{n_4}^{1/2}$, and we use (\ref{first order derivative}) of Lemma \ref{deravative est}. Similar to the proof of Lemma 2.15 of Cao, Tanigawa and Zhai\cite{MR4421948}, we get
\begin{equation}\label{delta 2 4 moment p1}
  x\sum_{x<p\leqslant 2x}\left|\sum_{N<n\leqslant\sqrt{x}}\frac{d(n)}{n^{3/4}}\cos\left(4\pi\sqrt{np}-\frac{\pi}{4}\right)\right|^4\ll\frac{x^{2+\varepsilon}}{N}+x^{3/2+\varepsilon}N^{1/2}
  \ll\frac{x^{2+\varepsilon}}{N}
\end{equation}
by noting that $N\ll M\ll\sqrt{x}$ and $N\ll x^{1/8}$.

By the Voronoi's formula of $\Delta(p)$ we get $\delta_2(p,\sqrt{x})\ll x^{1/4+\varepsilon}$, which combing (\ref{delta 2 mean square}) gives
\begin{equation}\label{delta 2 4 moment p2}
  \sum_{x<p\leqslant 2x}\left|\delta_2(p,\sqrt{x})\right|^4\ll x^{1/2+\varepsilon}\sum_{x<p\leqslant 2x}\left|\delta_2(p,\sqrt{x})\right|^2\ll x^{7/4+\varepsilon}\ll x^{2+\varepsilon}N^{-1}.
\end{equation}

From (\ref{delta 2 4 moment chaifen}), (\ref{delta 2 4 moment p1}) and (\ref{delta 2 4 moment p2}) we get the estimate
\begin{equation}\label{delta2 4 moment}
  \sum_{x<p\leqslant 2x}{\delta_2}^4(p,N)\ll x^{2+\varepsilon}N^{-1}\:(N\ll x^{1/8}).
\end{equation}

From (\ref{delta 2 mean square}), (\ref{delta2 4 moment}) and H\"{o}lder's inequality we get the estimate (\ref{delta 2 A power moment}) for the case $2<A\leqslant4$. From (\ref{delta 2 A0 power moment}), (\ref{delta2 4 moment}) and H\"{o}lder's inequality we get the estimate (\ref{delta 2 A power moment}) for the case $4<A\leqslant A_0$.
\end{proof}

For fixed integer $3\leqslant k\leqslant9$, we state a multiple exponential sums of the following form:
\begin{equation}\label{exp sum wlog}
  \mathop{\sum_{m_1\sim M_1}\cdots\sum_{m_k\sim M_k}}_{\substack{\alpha_k\neq0}}d(m_1)\cdots d(m_k)\mathop{\sum_{r\sim R}\sum_{s\sim S}}_{\substack{RS\asymp x}}\xi_r\eta_{s}\mathbf{e}\left(2\alpha_kr^{1/2}{s}^{1/2}\right),
\end{equation}
with $\xi_e, \eta_s\in\mathbb{C}, |\xi_r|\ll x^{\varepsilon}, |\eta_s|\ll x^{\varepsilon}$ and $M_1,\cdots,M_k,R,S,x$ are given real numbers, such that $1\ll M_1,\cdots,M_k\ll x$, and $\alpha_k$ is given in Lemma \ref{sub alphak}. In this sum, without loss of generality we assume $m_1=\max(m_1,\cdots,m_k)$. It is called a "Type I" sum, denoted by $S_I(k)$, if $\eta_s=1$ or $\eta_s=\log s$; otherwise it is called a "Type II" sum, denoted by $S_{II}(k)$.

\begin{lemma}\label{Type I est}
  Suppose that $\xi_r\ll1$, $\eta_s=1$ or $\eta_s=\log\ell$, $RS\asymp x$. Then for $R\ll x^{1/4}$, there holds
\begin{equation*} 
  x^{-\varepsilon}S_I(k)\ll x^{1/2}{M_1}^{5/4}(M_2\cdots M_k)+x^{3/4}{M_1}^{(2^{k-3}+3/4)}(M_2\cdots M_k).
\end{equation*}
\end{lemma}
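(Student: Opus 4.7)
The plan is to estimate the inner sum over $s$ by the second-derivative test, sum trivially over $r$, and then sum over $(m_1,\ldots,m_k)$ using the lower bound for $|\alpha_k|$ provided by Lemma~\ref{sub alphak}. Throughout, I will use $d(m)\ll m^{\varepsilon}$ to absorb the divisor factors into an $x^{\varepsilon}$, and handle the case $\eta_s=\log s$ by partial summation (which produces at most an additional $\log x$, likewise absorbed). So I may assume $\eta_s=1$.

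Fix $m_1,\ldots,m_k$ with $\alpha_k\neq 0$ and fix $r\sim R$. The phase $f(s)=2\alpha_k r^{1/2}s^{1/2}$ satisfies $f''(s)=-\frac{1}{2}\alpha_k r^{1/2}s^{-3/2}$, so $|f''(s)|\asymp \lambda_2:=|\alpha_k|\,r^{1/2}\,S^{-3/2}$ on $(S,2S]$. The second-derivative estimate (\ref{second order derivative}) gives
\[
\sum_{s\sim S}\mathbf{e}(f(s))\ll S\,\lambda_2^{1/2}+\lambda_2^{-1/2}\ll S^{1/4}|\alpha_k|^{1/2}r^{1/4}+S^{3/4}|\alpha_k|^{-1/2}r^{-1/4}.
\]
Summing over $r\sim R$ with $|\xi_r|\ll 1$, using $\sum_{r\sim R}r^{1/4}\ll R^{5/4}$ and $\sum_{r\sim R}r^{-1/4}\ll R^{3/4}$, together with $RS\asymp x$, this becomes
\[
\sum_{r,s}\xi_r\,\mathbf{e}(f(s))\ll R\,x^{1/4}\,|\alpha_k|^{1/2}+x^{3/4}\,|\alpha_k|^{-1/2}.
\]

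It remains to sum over $m_1,\ldots,m_k$. For the first term I use the trivial bound $|\alpha_k|^{1/2}\ll M_1^{1/4}$ (since $m_1$ is the maximum), which together with $\sum_{m_i\sim M_i}d(m_i)\ll M_i\,x^{\varepsilon}$ and the hypothesis $R\ll x^{1/4}$ contributes
\[
R\,x^{1/4+\varepsilon}\cdot M_1^{1/4}\cdot M_1M_2\cdots M_k\ll x^{1/2+\varepsilon}\,M_1^{5/4}(M_2\cdots M_k).
\]
For the second term, Lemma~\ref{sub alphak} gives $|\alpha_k|^{-1/2}\ll M_1^{2^{k-3}-1/4}$ whenever $\alpha_k\neq 0$, yielding
\[
x^{3/4+\varepsilon}\cdot M_1^{2^{k-3}-1/4}\cdot M_1M_2\cdots M_k\ll x^{3/4+\varepsilon}\,M_1^{2^{k-3}+3/4}(M_2\cdots M_k),
\]
and combining the two contributions proves the claim.

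There is no substantial obstacle here; the point is simply that because $R\ll x^{1/4}$ is short the $r$-sum can be estimated trivially, while the longer $s$-sum absorbs the oscillation through the second-derivative test, and the polynomial lower bound on $|\alpha_k|$ from Lemma~\ref{sub alphak} controls the ``resonance'' factor $|\alpha_k|^{-1/2}$ in the $m$-summation. The real work in the paper will instead be for the Type II sum, where the double large sieve (Lemma~\ref{Double large sieve}) becomes necessary.
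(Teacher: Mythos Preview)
Your proof is correct and follows essentially the same route as the paper: apply the second-derivative test (Lemma~\ref{deravative est}) to the inner $s$-sum, sum trivially over $r$ using $R\ll x^{1/4}$, bound $|\alpha_k|^{1/2}\ll M_1^{1/4}$ for the first term and invoke Lemma~\ref{sub alphak} for the second. The only cosmetic difference is that you spell out the partial-summation step for $\eta_s=\log s$, whereas the paper simply absorbs it into the $x^{\varepsilon}$.
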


\begin{proof}
  Set $f(s)=2\alpha_k\sqrt{rs}$. It is easy to see that 
\begin{equation*}
  f''(s)=-\frac{1}{2}\alpha_kr^{1/2}s^{-3/2}.
\end{equation*}
If $R\ll x^{1/4}$, then by Lemma \ref{deravative est}, we deduce that
\begin{equation*}
\begin{aligned}
  x^{-\varepsilon}S_I(k)&\ll\mathop{\sum_{m_1\sim M_1}\cdots\sum_{m_k\sim M_k}}_{\substack{\alpha_k\neq0}}\sum_{r\sim R}\left|\sum_{s\sim S}\mathbf{e}(f(s))\right|\\
  &\ll\mathop{\sum_{m_1\sim M_1}\cdots\sum_{m_k\sim M_k}}_{\substack{\alpha_k\neq0}}\sum_{r\sim R}\left(S(|\alpha_k|R^{1/2}S^{-3/2})^{1/2}+(|\alpha_k|R^{1/2}S^{-3/2})^{-1/2}\right)\\
  &\ll x^{1/4}R{M_1}^{5/4}(M_2\cdots M_k)+x^{3/4}\mathop{\sum_{m_1\sim M_1},\cdots,\sum_{m_k\sim M_k}}_{\substack{\alpha_k\neq0}}\frac{1}{|\alpha_k|^{1/2}},
\end{aligned} 
\end{equation*}
 then by Lemma \ref{sub alphak} we have
\begin{equation*}
\begin{aligned}
  x^{-\varepsilon}S_I(k)&\ll x^{1/4}R{M_1}^{5/4}(M_2\cdots M_k)+x^{3/4}{M_1}^{(2^{k-3}+3/4)}(M_2\cdots M_k)\\
  &\ll x^{1/2}{M_1}^{5/4}(M_2\cdots M_k)+x^{3/4}{M_1}^{(2^{k-3}+3/4)}(M_2\cdots M_k).
\end{aligned}
\end{equation*}
\end{proof}

In order to separate the dependence from the range of summation we appeal to the following formula
\begin{lemma}\label{separate}
  Let $0<V\leqslant U<\nu U<\lambda V$ and let $a_{v}$ be complex numbers with $|a_{v}|\leqslant1$. We then have
\begin{equation*}
  \sum_{U<u<\nu U}a_u=\frac{1}{2\pi}\int_{-V}^{V}\left(\sum_{V<v<\lambda V}a_vv^{-it}\right)U^{it}(\nu^{it}-1)t^{-1}dt+O(\log(2+V)),
\end{equation*}
where the constant implied in $O$ depends on $\lambda$ only.
\end{lemma}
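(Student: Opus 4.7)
The plan is to interpret the integral on the right-hand side as a truncated Dirichlet discontinuous factor that extracts the indicator $\mathbf{1}_{U<v<\nu U}$ from each $v$ in the outer sum. Since the hypothesis $0<V\leqslant U<\nu U<\lambda V$ forces $(U,\nu U)\subset(V,\lambda V)$, the first step is the trivial rewriting
\begin{equation*}
\sum_{U<u<\nu U}a_u=\sum_{V<v<\lambda V}a_v\,\mathbf{1}_{U<v<\nu U}.
\end{equation*}
Swapping the finite $v$-sum with the $t$-integral (legitimate by Fubini, since $(\nu^{it}-1)/t=i\log\nu+O(t)$ is bounded near $t=0$), the lemma reduces to showing, uniformly for each integer $v\in(V,\lambda V)$, that $I(v)=\mathbf{1}_{U<v<\nu U}+O(E(v))$ with $\sum_{v}E(v)\ll\log(2+V)$, where
\begin{equation*}
I(v):=\frac{1}{2\pi i}\int_{-V}^{V}v^{-it}U^{it}(\nu^{it}-1)\,\frac{dt}{t}
\end{equation*}
and $E(v)$ is a suitable error weight to be determined.

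Setting $A:=\log(U/v)$ and $N:=\log\nu$, one has $v^{-it}U^{it}(\nu^{it}-1)=e^{it(A+N)}-e^{itA}$, so $I(v)$ is a difference of two copies of $\frac{1}{2\pi i}\int_{-V}^{V}e^{itC}\,dt/t$ with $C\in\{A+N,A\}$. Splitting into real and imaginary parts, the $\cos(tC)/t$ contribution is odd in $t$ and vanishes in principal value, while the $\sin(tC)/t$ contribution is even, reducing each integral to $\frac{1}{\pi}\int_0^V\sin(tC)/t\,dt$. The classical truncated Dirichlet integral
\begin{equation*}
\int_0^V\frac{\sin(tC)}{t}\,dt=\frac{\pi}{2}\,\mathrm{sgn}(C)+O\!\left(\min\!\left(1,\frac{1}{V|C|}\right)\right)
\end{equation*}
applied with $C=\log(\nu U/v)$ and $C=\log(U/v)$ and subtracted then yields
\begin{equation*}
I(v)=\tfrac12\bigl[\mathrm{sgn}(\log(\nu U/v))-\mathrm{sgn}(\log(U/v))\bigr]+O(E(v)),
\end{equation*}
where $E(v):=\min(1,\,1/(V|\log(U/v)|))+\min(1,\,1/(V|\log(\nu U/v)|))$. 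Inspection of signs shows the main term coincides with $\mathbf{1}_{U<v<\nu U}$ except possibly at the endpoints $v\in\{U,\nu U\}$, where the discrepancy is $O(1)$ and is absorbed into the final error.

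It remains to estimate $\sum_{V<v<\lambda V}E(v)$ using $|a_v|\leqslant1$. Near $v=U$ one has $|\log(v/U)|\asymp|v-U|/U$ for integer $v$, so the sum there is bounded by $\sum_{k\geqslant1}\min(1,\,U/(Vk))\ll(U/V)\log(2+V)$. The decisive observation is that $V\leqslant U<\nu U<\lambda V$ traps the ratio $U/V$ in the interval $[1,\lambda/\nu)$, which is bounded in terms of $\lambda$ alone (as $\nu<\lambda$); hence the contribution is $O(\log(2+V))$. The neighbourhood of $v=\nu U$ is handled identically, giving the required total error. The main obstacle is purely bookkeeping: one must carefully treat the endpoints where $|\log(\cdot)|$ vanishes, and verify throughout that the implicit constants depend only on $\lambda$ and not on the specific values of $U$, $V$, or $\nu$. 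Modulo these standard but fiddly points, assembling the pieces above completes the proof.
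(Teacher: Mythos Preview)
The paper does not give any argument for this lemma; it simply cites Lemma~6 of Fouvry and Iwaniec. Your direct proof is correct and is exactly the standard derivation one would expect: recognise the integral as a truncated Dirichlet discontinuous factor, reduce to the sine integral $\int_0^V \sin(tC)/t\,dt$, and sum the tail errors using $U/V\in[1,\lambda)$. A few small remarks: (i) you have silently corrected the normalisation to $\frac{1}{2\pi i}$ rather than $\frac{1}{2\pi}$ as printed, and indeed the printed version cannot be right since it would produce a purely imaginary main term; (ii) your error summation is fine but could be phrased a bit more carefully, since in fact $U/V<\lambda$ forces $U/V\leqslant\lfloor\lambda\rfloor$ or so, hence only $O_\lambda(1)$ integers $v$ satisfy $|v-U|\leqslant U/V$, and the remaining harmonic sum over $|v-U|\leqslant(\lambda-1)V$ gives the $\log(2+V)$. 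These are exactly the ``fiddly points'' you flagged, and they do go through as you indicate.
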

\begin{proof}
  See Lemma 6 of Fouvry and Iwaniec\cite{MR1027058}.
\end{proof}
\begin{lemma}\label{Type II est}
  Suppose $|\xi_r|\ll x^{\varepsilon}, |\eta_s|\ll x^{\varepsilon}$ with $r\sim R$, $s\sim S$, $RS\asymp x$. Then for $x^{1/4}\ll R\ll{x^{1/2}}$, there holds
\begin{equation*}
  S_{II}(k)\ll x^{7/8}(M_1\cdots M_k)+x^{15/16}{M_1}^{3/4}(M_2\cdots M_k)+x^{3/4}{M_1}^{(2^{k-3}+1)}(M_2\cdots M_k).
\end{equation*}
\end{lemma}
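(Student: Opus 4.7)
The plan is to apply the Fouvry--Iwaniec double large sieve (Lemma \ref{Double large sieve}) after bundling the variables $m_1,\ldots,m_k$ together with $r$ on one side, so that $s$ sits alone on the other side. Concretely, I would introduce the combined index $r' = (m_1,\ldots,m_k,r)$ with coefficient $\varphi_{r'} = d(m_1)\cdots d(m_k)\xi_r$ (of size $\ll x^\varepsilon$) and phase $x_{r'} = 2\alpha_k(m)\sqrt{r}$, and keep $y_s = \sqrt{s}$, $\psi_s = \eta_s$. Because $|\alpha_k(m)|\ll\sqrt{M_1}$ and $r\asymp R$, we have $|x_{r'}|\leq X\asymp\sqrt{M_1 R}$ and $|y_s|\leq Y\asymp\sqrt{S}$, so $XY\asymp\sqrt{M_1 x}$, and Lemma \ref{Double large sieve} yields
\[
|S_{II}(k)|^2 \;\ll\; x^\varepsilon(1+\sqrt{M_1 x})\,\mathscr{B}_\varphi(\mathscr{X},Y)\,\mathscr{B}_\psi(\mathscr{Y},X).
\]

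Next I would estimate $\mathscr{B}_\psi$ directly via Lemma \ref{iwaniec and sarkozy}: the pair-condition $|\sqrt{s_1}-\sqrt{s_2}|\ll 1/\sqrt{M_1 R}$ with $s_i\sim S$ yields $\mathscr{B}_\psi\ll x^\varepsilon S(1+\sqrt{S/(M_1 R)})$. The heart of the argument is the bound for $\mathscr{B}_\varphi$, which is essentially the number of quadruples $((m,r),(m',r'))$ with $\alpha_k(m),\alpha_k(m')\neq 0$ satisfying $|\alpha_k(m)\sqrt{r}-\alpha_k(m')\sqrt{r'}|\ll 1/\sqrt{S}$. For fixed $(m,m')$, I would count $(r,r')$ by a direct geometric argument: given $r$, the condition confines $\sqrt{r'}$ to an interval of length $\ll 1/(|\alpha_k(m')|\sqrt{S})$, which squares to an $r'$-interval of length $\ll\sqrt{R}/(|\alpha_k(m')|\sqrt{S})$. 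Summing over $r\sim R$ and taking the symmetric minimum in $(m,m')$ gives
\[
\#\{(r,r')\}\;\ll\;\frac{R^{3/2}}{\sqrt{S}\,\max(|\alpha_k(m)|,|\alpha_k(m')|)}+R.
\]

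The summation over $(m,m')$ then reduces to bounding $\sum_m 1/|\alpha_k(m)|$, which I would handle by a dyadic decomposition: Lemma \ref{N O solutions} (applied with $N_i=M_i$, $E=M_1$) gives $\#\{m:|\alpha_k(m)|\leq 2^{-j}\}\ll 2^{-j}M_1^{1/2}(M_2\cdots M_k)+M_2\cdots M_k$, while Lemma \ref{sub alphak} truncates the dyadic range above by $j\leq(2^{k-2}-\tfrac12)\log_2 M_1$. The resulting estimate is $\sum_m 1/|\alpha_k(m)|\ll x^\varepsilon M_1^{2^{k-2}-1/2}(M_2\cdots M_k)$, whence
\[
\mathscr{B}_\varphi\;\ll\;x^\varepsilon\Bigl[\frac{R^{3/2}}{\sqrt{S}}M_1^{2^{k-2}+1/2}(M_2\cdots M_k)^2+R(M_1\cdots M_k)^2\Bigr].
\]

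Finally I would substitute these bounds into the double large sieve inequality, use $RS=x$, and take square roots. Direct calculation shows that each of the four resulting cross-terms is majorized, using the constraint $x^{1/4}\ll R\ll x^{1/2}$, by one of the three expressions in the lemma: pairing the diagonal $R$-piece of $\mathscr{B}_\varphi$ with the off-diagonal $\sqrt{S/(M_1R)}$-factor from $\mathscr{B}_\psi$ and invoking $R\geq x^{1/4}$ produces $x^{7/8}(M_1\cdots M_k)$, while the $R^{3/2}M_1^{2^{k-2}+1/2}$-piece, after applying $R\leq x^{1/2}$, is bounded by $x^{3/4}M_1^{2^{k-3}+1}(M_2\cdots M_k)$; the intermediate cross-terms match $x^{15/16}M_1^{3/4}(M_2\cdots M_k)$. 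The main technical obstacle is the bound for $\mathscr{B}_\varphi$: the interval counting for $(r,r')$ must be married carefully with the dyadic estimate on $\sum_m 1/|\alpha_k(m)|$, through the combined use of Lemmas \ref{sub alphak} and \ref{N O solutions}, in order to get the correct exponent of $M_1$ in the final bound.
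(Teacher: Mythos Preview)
Your argument is correct and in fact slightly sharper than the paper's, but it proceeds differently at the key step. Both you and the paper apply Lemma~\ref{Double large sieve} with the same grouping (bundling $(m_1,\dots,m_k,r)$ against $s$) and both bound $\mathscr{B}_\psi$ via Lemma~\ref{iwaniec and sarkozy}. The divergence is in the treatment of $\mathscr{B}_\varphi$. The paper quotes its Section~3 Proposition, whose proof combines two counts --- the ``$h$-aspect'' (fix $m,\tilde m$, count $r,\tilde r$) and the ``$m$-aspect'' (fix $r,\tilde r$, count $m,\tilde m$ via Lemma~\ref{(2.2) of Zhai} and Lemma~\ref{N O solutions}) --- and then takes $\min(a,b)\le\sqrt{ab}$. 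You instead use only the $h$-aspect, but you sharpen the resulting sum $\sum_m |\alpha_k(m)|^{-1}$ by a dyadic decomposition with Lemma~\ref{N O solutions}, whereas the paper bounds it trivially by $\#\{m\}\cdot M_1^{2^{k-2}-1/2}$ from Lemma~\ref{sub alphak} alone. Your route saves a factor $M_1$ in $\mathscr{B}_\varphi$, and when the four cross-terms are worked out none of them actually produces $x^{15/16}M_1^{3/4}(M_2\cdots M_k)$; they are all absorbed by the $x^{7/8}(M_1\cdots M_k)$ and $x^{3/4}M_1^{2^{k-3}+1}(M_2\cdots M_k)$ terms. So your sentence ``the intermediate cross-terms match $x^{15/16}M_1^{3/4}(M_2\cdots M_k)$'' is a misdescription, though harmless for the stated conclusion.

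One minor omission: the paper begins by applying Lemma~\ref{separate} to strip any residual coupling between the $r$- and $s$-ranges before invoking the double large sieve, picking up the negligible error $R(M_1\cdots M_k)\log x$. Depending on how literally one reads the constraint $RS\asymp x$ in the definition of $S_{II}(k)$, you may need this step too; it is routine and does not affect your estimates.
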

\begin{proof}
  Applying Lemma \ref{separate} we obtain
\begin{equation}\label{exp SII}
  S_{II}(k)\ll|{S_{II}}^{*}(k)|+R(M_1\cdots M_k)\log x,
\end{equation}
where 
\begin{equation*}
  {S_{II}}^{*}(k)=\mathop{\sum_{m_1\sim M_1}\cdots\sum_{m_k\sim M_k}}_{\substack{\alpha_k\neq0}}d(m_1)\cdots d(m_k)\mathop{\sum_{r\sim R}\sum_{s\sim S}}{\xi_r}^{*}{\eta_{s}}^{*}\mathbf{e}\left(2\alpha_kr^{1/2}{s}^{1/2}\right)
\end{equation*}
with ${\xi_r}^{*},{\eta_s}^{*}\ll x^{\varepsilon}$.

Applying Lemma \ref{Double large sieve} with $\mathscr{X}=2\alpha_kr^{1/2}$, $\mathscr{Y}=s^{1/2}$, we obtain
\begin{equation}\label{T * est1}
  {S_{II}}^{*}\ll x^{1/4}{M_1}^{1/4}(S_{a}S_{b})^{1/2},
\end{equation}
where 
\begin{equation*}
\begin{aligned}
  S_{a}&=\sum_{|\alpha_kr^{1/2}-{\alpha_k}'{r'}^{1/2}|\leqslant S^{-1/2}}d(m_1)\cdots d(m_k)d({m_1}')\cdots d({m_k}')\xi_r{\xi_r}',\\
  S_{b}&=\sum_{|s^{1/2}-{s'}^{1/2}|\leqslant(M_1R)^{-1/2}}\eta_s{\eta_s}',
\end{aligned}
\end{equation*}
and ${\alpha_k}'=\sqrt{{m_1}'}+(-1)^{i_1}\sqrt{{m_2}'}+\cdots+(-1)^{i_{k-1}}\sqrt{{m_k}'}$, $(i_1,\cdots, i_{k-1})\in\{0,1\}^{k-1}$.

By (\ref{mathscr B est}) with $\Delta=S^{-1/2}$, we easily obtain
\begin{equation}\label{S a est}
\begin{aligned}
  x^{-\varepsilon}S_{a}&\ll\mathscr{B}(M_1,M_2,R,S^{-1/2})\\
  &\ll{R}^{3/2}{M_1}^{3/2}(M_2\cdots M_k)^2+x^{-1/2}R^{2}{M_1}^{(2^{k-2}+3/2)}(M_2\cdots M_k)^2.
\end{aligned}
\end{equation}

For $S_b$, using Lemma \ref{iwaniec and sarkozy} we have
\begin{equation}\label{S b est}
\begin{aligned}
  x^{-\varepsilon}S_{b}&\ll\sum_{|s^{1/2}-{s'}^{1/2}|\leqslant(M_1R)^{-1/2}}1\\
  &\ll xR^{-1}+x^{3/2}R^{-2}{M_1}^{-1/2}.  
\end{aligned}
\end{equation}
Thus combining (\ref{exp SII}), (\ref{T * est1}), (\ref{S a est}), (\ref{S b est}) and the condition $x^{1/4}\ll R\ll x^{1/2}$ we conclude that
\begin{equation}\label{T * est2}
\begin{aligned}
  x^{-\varepsilon}S_{II}&\ll x^{1/4}{M_1}^{1/4}(R^{3/4}{M_1}^{3/4}(M_2\cdots M_k)+x^{-1/4}R{M_1}^{(2^{k-3}+3/4)}(M_2\cdots M_k))\\
  &\times(x^{1/2}R^{-1/2}+x^{3/4}R^{-1}{M_1}^{-1/4})+R(M_1\cdots M_k)\\
  &\ll x^{3/4}R^{1/4}(M_1\cdots M_k)+xR^{-1/4}{M_1}^{3/4}(M_2\cdots M_k)+x^{1/2}R^{1/2}{M_1}^{(2^{k-3}+1)}(M_2\cdots M_k)\\
  &+x^{3/4}{M_1}^{(2^{k-3}+3/4)}(M_2\cdots M_k)+R(M_1\cdots M_k)\\
  &\ll x^{7/8}(M_1\cdots M_k)+x^{15/16}{M_1}^{3/4}(M_2\cdots M_k)+x^{3/4}{M_1}^{(2^{k-3}+1)}(M_2\cdots M_k).
\end{aligned}
\end{equation}
\end{proof}

\section{The spacing problem}

Let $k$ be an integer such that $3\leqslant k\leqslant9$, $M_1,M_2,\cdots,M_k\geqslant1$, $R\geqslant1$. In this section $\Delta$ is a small real positive number. We are going to investigate the distributions of real numbers of type
\begin{equation*}
  t(r,m_1,m_2,\cdots,m_k)=\alpha_k r^{\beta}
\end{equation*}
with $0<\beta<1$, $m_j\sim M_j(j=1,\cdots,k)$, $(i_1,\cdots,i_{k-1})\in\{0,1\}^{k-1}$, $\alpha_k={m_1}^{1/2}+(-1)^{i_1}{m_2}^{1/2}+\cdots+(-1)^{i_{k-1}}{m_k}^{1/2}\neq0$, $r\sim R$.

Let $\mathscr{B}(M_1, M_2,\cdots,M_k, R, \Delta)$ denote the number of $(2k+2)$-turplets \\$(m_1, \tilde{m_1}, m_2, \tilde{m_2},\cdots,m_k,\tilde{m_k}, r, \tilde{r})$ with $m_j, \tilde{m_j}\sim M_j(j=1,\cdots,k)$, $r, \tilde{r}\sim R$ satisfying
\begin{equation}\label{mathscr B}
  |t(r,m_1,m_2,\cdots,m_k)-t(\tilde{r},\tilde{m_1},\tilde{m_2},\cdots,\tilde{m_k})|\leqslant\Delta,
\end{equation}
and we denote $t(\tilde{r},\tilde{m_1},\tilde{m_2},\cdots,\tilde{m_k})$ by $\tilde{\alpha_k}{\tilde{r}}^\beta$, where
\begin{equation*}
  \tilde{\alpha_k}={\tilde{m_1}}^{1/2}+(-1)^{i_1}{\tilde{m_2}}^{1/2}+\cdots+(-1)^{i_{k-1}}{\tilde{m_k}}^{1/2}.
\end{equation*}

Our aim is to prove the following:

\begin{proposition}
  We have
\begin{equation}\label{mathscr B est}
\begin{aligned}
  &\mathscr{B}(M_1, M_2,\cdots,M_k, R, \Delta)\\
  &\ll R^{2-\beta}{M_1}^{3/2}(M_2\cdots M_k)^2+\Delta R^{2-\beta}{M_1}^{3/2}(M_2\cdots M_k)^2.
\end{aligned}
\end{equation}
\end{proposition}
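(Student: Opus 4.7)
The plan is to separate the constraint on the $r$-variables from the constraint on the $m$-variables, exploiting the product structure $t(r,m_1,\ldots,m_k)=\alpha_k r^\beta$. First I will fix $(m_1,\ldots,m_k)$ and $(\tilde m_1,\ldots,\tilde m_k)$, which determines $\alpha_k$ and $\tilde\alpha_k$, and apply Lemma \ref{(2.2) of Zhai} with $a_r=\alpha_k r^\beta$ and $b_{\tilde r}=\tilde\alpha_k\tilde r^\beta$. The resulting count of $(r,\tilde r)$-pairs becomes the geometric mean of two one-sided counts
\[
N(\alpha_k)=\#\{(r,r')\colon r,r'\sim R,\ |\alpha_k||r^\beta-r'^\beta|\leqslant\Delta\},
\]
each of which the mean-value theorem applied to $x\mapsto x^\beta$ on $[R,2R]$ bounds by $R+\Delta R^{2-\beta}|\alpha_k|^{-1}$. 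Unfolding the geometric mean via $\sqrt{ab}\ll a+b$ and summing over $(m_j)$ and $(\tilde m_j)$ then yields, schematically,
\[
\mathscr{B}\ll R(M_1\cdots M_k)^2+\Delta R^{2-\beta}(M_1\cdots M_k)\sum_{\substack{m_j\sim M_j\\ \alpha_k\neq0}}\frac{1}{|\alpha_k|}.
\]

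The sum $\sum|\alpha_k|^{-1}$ I will evaluate by a dyadic decomposition on the size of $|\alpha_k|$: Lemma \ref{sub alphak} truncates the sum from above at $|\alpha_k|\asymp M_1^{-(2^{k-2}-1/2)}$, and on each dyadic level $|\alpha_k|\asymp 2^{-j}$ I invoke Lemma \ref{N O solutions} with $\rho\asymp 2^{-j}$ to count the admissible tuples. The ``linear'' term of Lemma \ref{N O solutions} contributes logarithmically and gives a piece of size $M_1^{1/2}(M_2\cdots M_k)$, while the ``trivial'' term, geometric-summed down to the lower cutoff, is dominated by the smallest dyadic level and produces a piece of order $M_1^{2^{k-2}-1/2}(M_2\cdots M_k)$. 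Multiplying through by $\Delta R^{2-\beta}(M_1\cdots M_k)$ then delivers the $\Delta$-dependent second term of (\ref{mathscr B est}).

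Finally, to replace the crude contribution $R(M_1\cdots M_k)^2$ above by the sharper first term $R^{2-\beta}M_1^{3/2}(M_2\cdots M_k)^2$, I will rerun the argument in the near-diagonal regime $\alpha_k\approx\tilde\alpha_k$ with the roles of $(r,\tilde r)$ and $(m_1,\tilde m_1)$ swapped: fix $(m_j),(\tilde m_j)$ for $j\geqslant2$ and the pair $(r,\tilde r)$, view the surviving constraint as a restriction on $(m_1,\tilde m_1)$ alone, and estimate the number of such pairs via Lemma \ref{iwaniec and sarkozy}, which yields the saving of $M_1^{1/2}/R^\beta$ relative to the trivial count needed to upgrade $R\cdot M_1^2$ to $R^{2-\beta}\cdot M_1^{3/2}$. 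The main difficulty will be balancing these two complementary estimates across the entire parameter range $(M_1,\ldots,M_k,R,\Delta)$ uniformly in $\beta\in(0,1)$, and organising the dyadic bookkeeping so that the $M_1$-exponent appearing with the $\Delta$-term is the one actually required by the subsequent applications of (\ref{mathscr B est}), in particular inside the proof of Lemma \ref{Type II est}.
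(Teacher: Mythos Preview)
Your plan is sound and reaches the goal, but it differs from the paper's argument in several respects, so let me compare.

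Both proofs share the same skeleton: obtain two complementary bounds for $\mathscr{B}$, one in which the $\Delta$-free piece is $R(M_1\cdots M_k)^2$ and another in which it is $R^2M_1(M_2\cdots M_k)^2$, and combine via $\min(a,b)\ll(ab)^{1/2}$ to get $R^{3/2}M_1^{3/2}(M_2\cdots M_k)^2$ (which, since the only application has $\beta=1/2$, is the required $R^{2-\beta}M_1^{3/2}(M_2\cdots M_k)^2$). The differences lie in how each of the two bounds is produced.

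For the first (``$r$-aspect'') bound, the paper fixes the $m$-tuples and sorts pairs $(r,\tilde r)$ by $\mu=\gcd(r,\tilde r)$, using that the points $(r/\tilde r)^\beta$ with $\gcd(r,\tilde r)=\mu$ are $\asymp R^{-2}\mu^{2}$-spaced; it then sums $|\tilde\alpha_k|^{-1}$ by the crude pointwise bound of Lemma~\ref{sub alphak}, obtaining $\Delta R^{2-\beta}M_1^{2^{k-2}+3/2}(M_2\cdots M_k)^2$. You instead apply Lemma~\ref{(2.2) of Zhai} to decouple to one-sided counts $N(\alpha_k)$ and use the mean-value theorem on $x\mapsto x^\beta$; your dyadic treatment of $\sum|\alpha_k|^{-1}$ via Lemma~\ref{N O solutions} is sharper and gives exponent $2^{k-2}+\tfrac12$ rather than $2^{k-2}+\tfrac32$. (Note the displayed statement of the Proposition has $M_1^{3/2}$ in the $\Delta$-term, but the paper's proof and the application in Lemma~\ref{Type II est} both use $M_1^{2^{k-2}+3/2}$; you were right to flag this.)

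For the second (``$m$-aspect'') bound, the paper fixes $(r,\tilde r)$ and applies Lemma~\ref{(2.2) of Zhai} to the full sequences $\{(r/\tilde r)^\beta\alpha_k\}$ and $\{\tilde\alpha_k\}$ indexed by all $k$ variables, then invokes Lemma~\ref{N O solutions}. You instead freeze $m_2,\ldots,m_k,\tilde m_2,\ldots,\tilde m_k$ as well and count only in $(m_1,\tilde m_1)$: after Lemma~\ref{(2.2) of Zhai} this reduces to $|\sqrt{m_1}-\sqrt{m_1'}|\leqslant \Delta r^{-\beta}$, handled by Lemma~\ref{iwaniec and sarkozy}. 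Both routes give the same $R^2M_1(M_2\cdots M_k)^2+\Delta R^{2-\beta}M_1^{3/2}(M_2\cdots M_k)^2$.

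In short: your argument is a legitimate alternative, slightly more elementary (no gcd stratification) and in fact marginally sharper in the $\Delta$-term, though that extra saving is not used downstream.
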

\begin{proof}
Suppose $m_1,m_2,\cdots,m_k,\tilde{m_1},\tilde{m_2},\cdots,\tilde{m_k}$ are fixed, without loss of generality we assume that $m_1>m_2>\cdots>m_k$ and $\tilde{m_1}>\tilde{m_2}>\cdots>\tilde{m_k}$. We denote the number of solutions of
\begin{equation}\label{mathscr B1}
  \left|\left(\frac{\alpha_k}{\tilde{\alpha_k}}\right)
  -\left(\frac{r}{\tilde{r}}\right)^{\beta}\right|\leq\frac{\Delta}{|\tilde{\alpha_k}|R^{\beta}}
\end{equation}
by $\mathscr{B}(m_1,m_2,\cdots,m_k,\tilde{m_1},\tilde{m_2},\cdots,\tilde{m_k}, R, \Delta)$. Let $\mathscr{B}(m_1,m_2,\cdots,m_k,\tilde{m_1},\tilde{m_2},\cdots,\tilde{m_k}, R, \Delta,\mu)$ be the number of solutions of $r,\tilde{r}$ to (\ref{mathscr B1}) which additionally satisfying $\gcd(r,\tilde{r})=\mu$.

We divide the solutions of $r$, $\tilde{r}$ into classes each one having fixed values $\gcd(r,\tilde{r})=\mu$, the points $\left(\frac{r}{\tilde{r}}\right)$ are spaced by $c(\beta)R^2\mu^{-2}$, where $c(\beta)$ is a constant. Then 
\begin{equation*}
  \mathscr{B}(m_1,m_2,\cdots,m_k,\tilde{m_1},\tilde{m_2},\cdots,\tilde{m_k}, R, \Delta,\mu)\ll1+\Delta(|\tilde{\alpha_k}|R^{\beta})^{-1}R^2\mu^{-2}
\end{equation*}

 Let $\mathscr{B}(M_1, M_2,\cdots,M_k, R, \Delta,\mu)$ be the number of solutions to (\ref{mathscr B}) which additionally satisfying $\gcd(r,\tilde{r})=\mu$. Summing over $m_1,m_2,\cdots,m_k,\tilde{m_1},\tilde{m_2},\cdots,\tilde{m_k}$ we have for $1\leqslant j\leqslant k$,
\begin{equation*}
\begin{aligned}
  \mathscr{B}(M_1, M_2,\cdots,M_k, R, \Delta,\mu)&\ll \sum_{\substack{m_j,\tilde{m_j}\sim M_j}}\mathscr{B}(m_1,m_2,\cdots,m_k,\tilde{m_1},\tilde{m_2},\cdots,\tilde{m_k}, R, \Delta,\mu)\\
  &\ll \sum_{\substack{m_j,\tilde{m_j}\sim M_j}}(1+\Delta(|\tilde{\alpha_k}|R^{\beta})^{-1}R^2\mu^{-2})\\
  &\ll(M_1M_2\cdots M_k)^2+\Delta M_1M_2\cdots M_k R^{2-\beta}\mu^{-2}\sum_{\substack{\tilde{m_j}\sim M_j}}(|\tilde{\alpha_k}|)^{-1}.
\end{aligned} 
\end{equation*}
Using Lemma \ref{sub alphak} we conclude that
\begin{equation*}
\begin{aligned}
  \mathscr{B}(M_1, M_2,\cdots,M_k, R, \Delta,\mu)\ll(M_1M_2\cdots M_k)^2+\Delta{M_1}^{(2^{k-2}+3/2)}(M_2\cdots M_k)^2R^{2-\beta}\mu^{-2}.
\end{aligned} 
\end{equation*}
Summing over $\mu$ we have
\begin{equation}\label{h-aspect}
\begin{aligned}
  \mathscr{B}(M_1, M_2,\cdots,M_k, R, \Delta)\ll R(M_1M_2\cdots M_k)^2+\Delta{M_1}^{(2^{k-2}+3/2)}(M_2\cdots M_k)^2R^{2-\beta}.
\end{aligned} 
\end{equation}
On the other hand, for fixed $r$, $\tilde{r}$, we denote the number of solutions of $m_1, \tilde{m_1}, m_2, \tilde{m_2},\cdots,m_k$, $\tilde{m_k}$ to
\begin{equation}\label{mathscr B2}
  \left|\left(\frac{r}{\tilde{r}}\right)^{\beta}(\alpha_k-\tilde{\alpha_k})\right|\leq\frac{\Delta}{R^{\beta}}
\end{equation}
by $\mathscr{B}(M_1, M_2,\cdots,M_k, r, \tilde{r}, R, \Delta)$. Then we can obtain that $\mathscr{B}(M_1, M_2,\cdots,M_k, R, \Delta)$ is\\ bounded by
\begin{equation*}
   \sum_{r,\tilde{r}\sim R}\mathscr{B}(M_1, M_2,\cdots,M_k, r, \tilde{r}, R, \Delta).
\end{equation*}

Applying Lemma \ref{(2.2) of Zhai} to the sequences $\{a_r\}=\{(r/\tilde{r})^{\beta}\alpha_k\}$ and $\{b_s\}=\{\tilde{\alpha_k}\}$, we get
\begin{equation*}
\begin{aligned}
  &\mathscr{B}(M_1, M_2,\cdots,M_k, r, \tilde{r}, R, \Delta)\\
  &\qquad\leqslant3\left(\#\left\{(m_1,\cdots,m_k,m_1',\cdots,m_k'):
  \left|\left(\frac{r}{\tilde{r}}\right)^{\beta}(\alpha_k-\alpha_k')\right|
  \leqslant\frac{\Delta}{R^{\beta}}\right\}\right)^{1/2}\\
  &\qquad\times\left(\#\left\{(\tilde{m_1},\cdots,\tilde{m_k},\tilde m_1',\cdots,\tilde m_k'):
  \left|\tilde{\alpha_k}-\tilde\alpha_k'\right|
  \leqslant\frac{\Delta}{R^{\beta}}\right\}\right)^{1/2}\\
  &\qquad\ll\Delta R^{-\beta}{M_1}^{-1/2}(M_1\cdots M_k)^2+{M_1}^{-1}(M_1\cdots M_k)^2,
\end{aligned}
\end{equation*}
where we use Lemma \ref{N O solutions}, and $\alpha_k'$, $\tilde\alpha_k'$ are defined similar to $\alpha_k$ and $\tilde{\alpha_k}$.

Summing over $r$ and $\tilde{r}$ we have
\begin{equation}\label{m-aspect}
\begin{aligned}
  \mathscr{B}(M_1, M_2,\cdots,M_k, R, \Delta)&\ll\sum_{r,\tilde{r}\sim R}\mathscr{B}(M_1, M_2,\cdots,M_k, r, \tilde{r}, R, \Delta)\\
  &\ll\Delta R^{2-\beta}{M_1}^{3/2}(M_2\cdots M_k)^2+R^2{M_1}(M_2\cdots M_k)^2.
\end{aligned}
\end{equation}
Above all, combining (\ref{h-aspect}) and (\ref{m-aspect}) we get
\begin{equation*}
\begin{aligned}
  &\mathscr{B}(M_1, M_2, R, \Delta)\\
  \ll&\min(R(M_1M_2\cdots M_k)^2+\Delta{M_1}^{(2^{k-2}+3/2)}(M_2\cdots M_k)^2R^{2-\beta},\\
  &\qquad\Delta R^{2-\beta}{M_1}^{3/2}(M_2\cdots M_k)^2+R^2{M_1}(M_2\cdots M_k)^2)\\
  \ll&\min(R(M_1M_2\cdots M_k)^2,R^2{M_1}(M_2\cdots M_k)^2)+\Delta R^{2-\beta}{M_1}^{(2^{k-2}+3/2)}(M_2\cdots M_k)^2\\
  \ll&R^{2-\beta}{M_1}^{3/2}(M_2\cdots M_k)^2+\Delta R^{2-\beta}{M_1}^{(2^{k-2}+3/2)}(M_2\cdots M_k)^2,
\end{aligned}
\end{equation*}
where we use $\min(a,b)\ll (ab)^{1/2}$.
\end{proof}

\section{Proof of the Theorem}

We are first going to estimate $\sum_{x\leqslant p\leqslant2x}\Delta^k(p)$. Using notations in Lemma \ref{delta 1 10 power moment} and Lemma \ref{delta 2 exp}, let $M$ be a real number such that $1\ll M\ll x$ and $k$ be an integer such that $3\leqslant k\leqslant9$, one has
\begin{equation}\label{delta^k exp}
\begin{aligned}
  \sum_{x<p\leqslant2x}\Delta^k(p)&=\sum_{x<p\leqslant2x}{\delta_1}^{k}(p,M)+O\left(\sum_{x<p\leqslant2x}{\delta_1}^{k-1}(p,M){\delta_2}(p,M)+\sum_{x<p\leqslant2x}{\delta_2}^{k}(p,M)\right)\\
  &:={\sum}_{11}+O\left({\sum}_{12}+{\sum}_{22}\right).
\end{aligned}
\end{equation}

For ${\sum}_{22}$, using (\ref{delta 2 A power moment}), for $M\ll x^{1/8}$ one has
\begin{equation}\label{sum 22 est}
  {\sum}_{22}\ll\left\{
  \begin{array}{lr}
    x^{1+\frac{k}{4}+\varepsilon}M^{-\frac{k}{4}},&2<k\leqslant4, \\
    x^{1+\frac{k}{4}+\varepsilon}M^{-\frac{A_0-k}{A_0-4}},& 4<k\leqslant9,
  \end{array}
  \right.
\end{equation}
where $A_0=262/27$.

For ${\sum}_{12}$, using Lemma \ref{delta 1 10 power moment} and (\ref{delta 2 A power moment}) and H\"{o}lder's inequality we obtain
\begin{equation}\label{sum 12 est}
  {\sum}_{12}\ll
  \left\{
  \begin{array}{lr}
    x^{1+\frac{k}{4}+\varepsilon}M^{-\frac{1}{4}},&2<k\leqslant8, \\
    x^{13/4+\varepsilon}M^{-\frac{A_0-9}{A_0-4}},& k=9.
  \end{array}
  \right.
\end{equation}

Next we turn to evaluate ${\sum}_{11}$. Using $\cos{\alpha}\cos{\beta}=\frac{1}{2}(\cos(\alpha-\beta)+\cos(\alpha+\beta))$ we have
\begin{equation}\label{sum 11 exp}
\begin{aligned}
  {\sum}_{11}&=\sum_{x<p\leqslant2x}\frac{p^{k/4}}{(\sqrt{2}\pi)^k}\sum_{m_1\leqslant M}\cdots\sum_{m_k\leqslant M}\frac{d(m_1)\cdots d(m_k)}{(m_1\cdots m_k)^{3/4}}\prod_{j=1}^{k}\cos\left(4\pi\sqrt{m_jp}-\frac{\pi}{4}\right)\\
  &=\frac{1}{(\sqrt{2}\pi)^{k}2^{k-1}}\sum_{\mathbf{i}_k\in\{0,1\}^{k-1}}\sum_{m_1\leqslant M}\cdots\sum_{m_k\leqslant M}\frac{d(m_1)\cdots d(m_k)}{(m_1\cdots m_k)^{3/4}}\sum_{x<p\leqslant2x}
  p^{k/4}\cos\left(4\pi\alpha_k\sqrt{p}-\frac{\pi}{4}\beta_k\right),
\end{aligned}
\end{equation}
where 
\begin{equation*}
\begin{aligned}
  \alpha_k&=\sqrt{m_1}+(-1)^{i_1}\sqrt{m_2}+\cdots+(-1)^{i_{k-1}}\sqrt{m_k},\\
  \beta_k&=1+(-1)^{i_1}+\cdots+(-1)^{i_{k-1}},\\
  \mathbf{i}_k&=(i_1,\cdots,i_{k-1}).
\end{aligned}
\end{equation*}
We divide the ${\sum_{11}}$ into two parts:
\begin{equation}\label{sum 11 chaifen}
  {\sum}_{11}:=\frac{1}{(\sqrt{2}\pi)^{k}2^{k-1}}\left({\sum}_{111}+{\sum}_{112}\right),
\end{equation}
where
\begin{equation*}
\begin{aligned}
  {\sum}_{111}&=\sum_{\mathbf{i}_k\in\{0,1\}^{k-1}}\cos(-\frac{\pi}{4}\beta_k)\mathop{\sum_{m_1\leqslant M}\cdots\sum_{m_k\leqslant M}}_{\substack{\alpha_k=0}}\frac{d(m_1)\cdots d(m_k)}{(m_1\cdots m_k)^{3/4}}\sum_{x<p\leqslant2x}p^{k/4},\\
  {\sum}_{112}&=\sum_{\mathbf{i}_k\in\{0,1\}^{k-1}}\mathop{\sum_{m_1\leqslant M}\cdots\sum_{m_k\leqslant M}}_{\substack{\alpha_k\neq0}}\frac{d(m_1)\cdots d(m_k)}{(m_1\cdots m_k)^{3/4}}
  \sum_{x<p\leqslant2x}p^{k/4}\cos\left(4\pi\alpha_k\sqrt{p}-\frac{\pi}{4}\beta_k\right).
\end{aligned}
\end{equation*}
First we consider the contribution of ${\sum}_{111}$. By the similar argument on (4.1)-(4.4) of Zhai\cite{MR2067871}, we obtain
\begin{equation}\label{sum 111 eva}
  {\sum}_{111}=B_k(\infty)\sum_{x<p\leqslant2x}p^{k/4}+O(x^{1+k/4+\varepsilon}M^{-1/2}),
\end{equation}
where
\begin{equation*}
  B_k(\infty)=\sum_{l=1}^{k-1}\binom{k-1}{l}s_{k,l}(\infty)\cos\frac{\pi(k-2l)}{4}.
\end{equation*}

Next we consider the contribution of ${\sum}_{112}$. One can see ${\sum}_{112}$ can be written as linear combination of $O(\log^kM)$ sums of the form
\begin{equation*}
\begin{aligned}
  &\sum_{\mathbf{i}_k\in\{0,1\}^{k-1}}\mathop{\sum_{m_1\sim M_1}\cdots\sum_{m_k\sim M_k}}_{\substack{\alpha_k\neq0}}\frac{d(m_1)\cdots d(m_k)}{(m_1\cdots m_k)^{3/4}}
  \sum_{x<p\leqslant2x}p^{k/4}\cos\left(4\pi\alpha_k\sqrt{p}-\frac{\pi}{4}\beta_k\right)\\
  =&\frac{1}{2}\sum_{\mathbf{i}_k\in\{0,1\}^{k-1}}\mathop{\sum_{m_1\sim M_1}\cdots\sum_{m_k\sim M_k}}_{\substack{\alpha_k\neq0}}\frac{d(m_1)\cdots d(m_k)}{(m_1\cdots m_k)^{3/4}}\\
  \times&\sum_{x<p\leqslant2x}p^{k/4}\left(\mathbf{e}\left(2\alpha_k\sqrt{p}-\frac{\beta_k}{8}\right)-\mathbf{e}\left(-2\alpha_k\sqrt{p}+\frac{\beta_k}{8}\right)\right),
\end{aligned}
\end{equation*}
where we use $\cos2\pi a=(\mathbf{e}(a)-\mathbf{e}(-a))/2$, and where $1\leqslant M_1,\cdots,M_k\leqslant M$, without loss of generality we assume that $\max(M_1,\cdots,M_k)=M_1$. By a splitting argument we only need to estimate
\begin{equation*}
  \mathcal{S}_1=\sum_{\mathbf{i}_k\in\{0,1\}^{k-1}}\mathop{\sum_{m_1\sim M_1}\cdots\sum_{m_k\sim M_k}}_{\substack{\alpha_k\neq0}}\frac{d(m_1)\cdots d(m_k)}{(m_1\cdots m_k)^{3/4}}\sum_{x<p\leqslant2x}p^{k/4}\mathbf{e}\left(2\alpha_k\sqrt{p}\right).
\end{equation*}
Trivially we have
\begin{equation}\label{S 1}
  \mathcal{S}_1\ll\frac{x^{k/4}}{(M_1\cdots M_k)^{3/4}}|{\mathcal{S}_1}^{*}|,
\end{equation}
where
\begin{equation*}
  {\mathcal{S}_1}^{*}=\sum_{\mathbf{i}_k\in\{0,1\}^{k-1}}\mathop{\sum_{m_1\sim M_1}\cdots\sum_{m_k\sim M_k}}_{\substack{\alpha_k\neq0}}d(m_1)\cdots d(m_k)
  \sum_{x<p\leqslant2x}\mathbf{e}\left(2\alpha_k\sqrt{p}\right).
\end{equation*}

It follows from partial summation that
\begin{equation}\label{S 1*}
\begin{aligned}
  {\mathcal{S}_1}^{*}&=\sum_{\mathbf{i}_k\in\{0,1\}^{k-1}}\mathop{\sum_{m_1\sim M_1}\cdots\sum_{m_k\sim M_k}}_{\substack{\alpha_k\neq0}}d(m_1)\cdots d(m_k)
  \int_{x}^{2x}\frac{d\mathcal{A}_k(u)}{\log u}\\
  &=\sum_{\mathbf{i}_k\in\{0,1\}^{k-1}}\mathop{\sum_{m_1\sim M_1}\cdots\sum_{m_k\sim M_k}}_{\substack{\alpha_k\neq0}}d(m_1)\cdots d(m_k)
  \left(\frac{\mathcal{A}_k(u)}{\log u}\bigg|_{x}^{2x}-\int_{x}^{2x}\frac{\mathcal{A}_k(u)}{u\log^2u}du\right),
\end{aligned}
\end{equation}
where
\begin{equation*}
  \mathcal{A}_k(u)=\sum_{p\leqslant u}\log p\:\mathbf{e}\left(2\alpha_k\sqrt{p}\right).
\end{equation*}
Moreover, we easily obtain
\begin{equation*}
  \mathcal{A}_k(u)=\sum_{n\leqslant u}\Lambda(n)\mathbf{e}\left(2\alpha_k\sqrt{n}\right)+O(u^{1/2}).
\end{equation*}

By a splitting argument we only need to give the upper bound estimate of the following sum
\begin{equation}\label{exp sum in this paper}
  \sum_{\mathbf{i}_k\in\{0,1\}^{k-1}}\mathop{\sum_{m_1\sim M_1}\cdots\sum_{m_k\sim M_k}}_{\substack{\alpha_k\neq0}}d(m_1)\cdots d(m_k)\sum_{n\sim x}
  \Lambda(n)\mathbf{e}\left(2\alpha_k\sqrt{n}\right).
\end{equation}

After using Heath-Brown's identity, i.e. Lemma \ref{H-B identity} with $k=3$, one can see that (\ref{exp sum in this paper}) can be written as linear combination of $O(\log^6x)$ sums, each of which is of the form
\begin{equation}\label{sum after H-B}
\begin{aligned}
  \mathcal{T}^{*}&:=\sum_{\mathbf{i}_k\in\{0,1\}^{k-1}}\mathop{\sum_{m_1\sim M_1}\cdots\sum_{m_k\sim M_k}}_{\substack{\alpha_k\neq0}}d(m_1)\cdots d(m_k)\\
  &\times{\sum_{n_1\sim N_1}}\cdots{\sum_{n_6\sim N_6}}(\log n_1)\mu(n_4)\mu(n_5)\mu(n_6)\mathbf{e}\left(2\alpha_k\sqrt{n_1\cdots n_6}\right),
\end{aligned}
\end{equation}
where $N_1\cdots N_6\asymp x$; $2N_i\leqslant(2x)^{1/3}$, $i=4,5,6$ and some $n_i$ may only take value $1$. Therefore, it is sufficient for us to estimate for each $\mathcal{T}^{*}$ defined as in
(\ref{sum after H-B}). Next, we will consider four cases.
\subsection*{Case 1}
\indent If there exists an $N_j$ such that $N_j\geqslant x^{3/4}$, then we must have $j\leqslant3$ for the fact that

\qquad\;\;\;$N_j\ll x^{1/3}$ with $j=4,5,6$. Let
\begin{equation*}
  r=\prod_{\substack{1\leqslant i\leqslant6\\i\neq j}}n_i,\qquad s=n_j,\qquad R=\prod_{\substack{1\leqslant i\leqslant6\\i\neq j}}N_i,\qquad S=N_j.
\end{equation*}
In this case, we can see that $\mathcal{T}^{*}$ is a sum of "Type I" satisfying $R\ll x^{1/4}$. By Lemma \ref{Type I est}, we have
\begin{equation*}
  x^{-\varepsilon}\cdot\mathcal{T}^{*}\ll x^{1/2}{M_1}^{5/4}(M_2\cdots M_k)+x^{3/4}{M_1}^{(2^{k-3}+3/4)}(M_2\cdots M_k).
\end{equation*}
\subsection*{Case 2}
\indent If there exists an $N_j$ such that $x^{1/2}\leqslant N_j< x^{3/4}$, then we take
\begin{equation*}
  r=\prod_{\substack{1\leqslant i\leqslant6\\i\neq j}}n_i,\qquad s=n_j,\qquad R=\prod_{\substack{1\leqslant i\leqslant6\\i\neq j}}N_i,\qquad S=N_j.
\end{equation*}
Thus, $\mathcal{T}^{*}$ is a sum of "Type II" satisfying $x^{1/4}\ll R\ll x^{1/2}$. By Lemma \ref{Type II est}, we have 
\begin{equation*}
\begin{aligned}
  x^{-\varepsilon}\cdot\mathcal{T}^{*}&\ll x^{7/8}(M_1\cdots M_k)+x^{15/16}{M_1}^{3/4}(M_2\cdots M_k)+x^{3/4}{M_1}^{(2^{k-3}+1)}(M_2\cdots M_k).
\end{aligned}
\end{equation*}
\subsection*{Case 3}
\indent If there exists an $N_j$ such that $x^{1/4}\leqslant N_j< x^{1/2}$, then we take
\begin{equation*}
  r=n_j,\qquad s=\prod_{\substack{1\leqslant i\leqslant6\\i\neq j}}n_i,\qquad R=N_j,\qquad S=\prod_{\substack{1\leqslant i\leqslant6\\i\neq j}}N_i.
\end{equation*}
Thus, $\mathcal{T}^{*}$ is a sum of "Type II" satisfying $x^{1/4}\ll R\ll x^{1/2}$. By Lemma \ref{Type II est}, we have 
\begin{equation*}
\begin{aligned}
  x^{-\varepsilon}\cdot\mathcal{T}^{*}&\ll x^{7/8}(M_1\cdots M_k)+x^{15/16}{M_1}^{3/4}(M_2\cdots M_k)+x^{3/4}{M_1}^{(2^{k-3}+1)}(M_2\cdots M_k).
\end{aligned}
\end{equation*}
\subsection*{Case 4} 
\indent If $N_j<x^{1/4}(j=1,2,3,4,5,6)$, without loss of generality, we assume that

\qquad\;\;$N_1\geqslant N_2\geqslant\cdots\geqslant N_6$. Let $\ell$ denote the natural number $j$ such that 
\begin{equation*}
  N_1N_2\cdots N_{j-1}<x^{1/4},\qquad N_1N_2\cdots N_j\geqslant x^{1/4}.
\end{equation*}
Since $N_1<x^{1/4}$ and $N_6<x^{1/4}$, then $2\leqslant\ell\leqslant5$. Thus we have
\begin{equation*}
  x^{1/4}\leqslant N_1N_2\cdots N_{\ell}=(N_1N_2\cdots N_{\ell-1})\cdot N_{\ell}<x^{1/4}\cdot x^{1/4}=x^{1/2}.
\end{equation*}
Let
\begin{equation*}
  r=\prod_{i=1}^{\ell}n_i,\qquad s=\prod_{i=\ell+1}^{6}n_i,\qquad R=\prod_{i=1}^{\ell}N_i,\qquad S=\prod_{i=\ell+1}^{6}N_i.
\end{equation*}
At this time, $\mathcal{T}^{*}$ is a sum of "Type II" satisfying $x^{1/4}\ll R\ll x^{1/2}$. By Lemma \ref{Type II est}, we have 
\begin{equation*}
\begin{aligned}
  x^{-\varepsilon}\cdot\mathcal{T}^{*}&\ll x^{7/8}(M_1\cdots M_k)+x^{15/16}{M_1}^{3/4}(M_2\cdots M_k)+x^{3/4}{M_1}^{(2^{k-3}+1)}(M_2\cdots M_k).
\end{aligned}
\end{equation*}
Combining the above four cases, we derive that
\begin{equation*}
\begin{aligned}
  x^{-\varepsilon}\cdot\mathcal{T}^{*}&\ll x^{7/8}(M_1\cdots M_k)+x^{15/16}{M_1}^{3/4}(M_2\cdots M_k)+x^{3/4}{M_1}^{(2^{k-3}+1)}(M_2\cdots M_k),
\end{aligned}
\end{equation*}
which combined with (\ref{S 1}) and (\ref{S 1*}) yields
\begin{equation}\label{sum 112 est}
\begin{aligned}
  &x^{-\varepsilon}\cdot{\sum}_{112}\\
  &\ll x^{7/8+k/4}(M_1\cdots M_k)^{1/4}+x^{15/16+k/4}(M_2\cdots M_k)^{1/4}+x^{3/4+k/4}{M_1}^{(2^{k-3}+1/4)}(M_2\cdots M_k)^{1/4}.
\end{aligned}
\end{equation}

Above all, since $M_1,\cdots,M_k\leqslant M$, by (\ref{sum 11 chaifen}), (\ref{sum 111 eva}), (\ref{sum 112 est}) we conclude that
\begin{equation}\label{sum 11 eva}
\begin{aligned}
  {\sum}_{11}&=\frac{B_k(\infty)}{(\sqrt{2}\pi)^k2^{k-1}}\sum_{x<p\leqslant2x}p^{k/4}\\
  &+O(x^{1+k/4+\varepsilon}M^{-1/2}+x^{7/8+k/4}M^{k/4}+x^{15/16+k/4}M^{(k-1)/4}+x^{3/4+k/4}{M}^{(2^{k-3}+k/4)},
\end{aligned}
\end{equation}
where $B_k(\infty)$ is defined in (\ref{sum 111 eva}).

Suppose $1\ll M\ll x^{1/8}$, combining (\ref{delta^k exp}), (\ref{sum 22 est}), (\ref{sum 12 est}) and (\ref{sum 11 eva}), taking 
\begin{equation*}
  M=\left\{
  \begin{array}{lr}
    x^{\min(\frac{1}{4k},\frac{1}{2^{k-1}+k+1})},&2<k\leqslant8, \\
    x^{\left(265+4\frac{A_0-k}{A_0-4}\right)^{-1}},& k=9,
  \end{array}
  \right.
\end{equation*}
 we conclude that
\begin{equation*}
  \sum_{x<p\leqslant2x}\Delta^k(p)=\frac{B_k(\infty)}{(\sqrt{2}\pi)^k2^{k-1}}\sum_{x<p\leqslant2x}p^{k/4}+O(x^{1+k/4-\theta(k,A_0)+\varepsilon}),
\end{equation*}
where
\begin{equation*}
   \theta(k,A_0)=\left\{
  \begin{array}{lr}
    {\min(\frac{1}{16k},\frac{1}{(2^{k+1}+4k+4)})},&2<k\leqslant8, \\
    {\left(4+265\frac{A_0-4}{A_0-k}\right)^{-1}},& k=9,
  \end{array}
  \right.
\end{equation*}
Thus replacing $x$ by $x/2$, $x/2^2$, and so on, and adding up all the results, we obtain
\begin{equation*}
  \sum_{p\leqslant x}\Delta^k(p)=\frac{B_k(\infty)}{(\sqrt{2}\pi)^k2^{k-1}}\sum_{p\leqslant x}p^{k/4}+O(x^{1+k/4-\theta(k,A_0)+\varepsilon}).
\end{equation*}
Thus we have completed the proof of the Theorem.

\section*{Acknowledgement}

The authors would like to appreciate the referee for his/her patience in refereeing this paper.
This work is supported by the Natural Science Foundation of Beijing Municipal (Grant No.1242003), and the National Natural Science Foundation of China (Grant No.11971476).

\bibliographystyle{plain}
\bibliography{reference.bib}

\end{document}